\newtheorem{thm}{Theorem}[section]
\newtheorem{lem}[thm]{Lemma}
\newtheorem{prop}[thm]{Proposition}
\theoremstyle{definition}
\newtheorem{rem}[thm]{Remark}
\newtheorem{ass}[thm]{Assumption}
\numberwithin{equation}{section}
\newcommand{\innerprod}[2]{\left\langle #1,\, #2 \right\rangle} 
\newcommand{\rmref}[1]{{\rm\ref{#1}}}
\newcommand{\linspan}{\operatorname{span}}
\newcommand{\ran}{\operatorname{ran}}
\newcommand{\dom}{\operatorname{dom}}
\newcommand{\ol}{\overline}
\newcommand{\wt}{\widetilde}
\newcommand{\wh}{\widehat}
\newcommand{\<}{\langle}
\renewcommand{\>}{\rangle}
\newcommand{\R}{\ensuremath{\mathbb R}}    
\newcommand{\C}{\ensuremath{\mathbb C}}    
\newcommand{\N}{\ensuremath{\mathbb N}}    
\newcommand{\calB}{\mathcal B}         
\newcommand{\calD}{\mathcal D}         
\newcommand{\calE}{\mathcal E}
\newcommand{\calH}{\mathcal H}
\newcommand{\calK}{\mathcal K}         
\newcommand{\calL}{\mathcal L}
\newcommand{\calO}{\mathcal O}
\newcommand{\calU}{\mathcal U}
\newcommand{\calX}{\mathcal X}         
\newcommand{\calY}{\mathcal Y}         
\newcommand{\calZ}{\mathcal Z}         
				 \newcommand{\bE}{\mathbb E}
				 \newcommand{\bH}{\mathbb H}
				 \newcommand{\bK}{\mathbb K}
				 \newcommand{\bP}{\mathbb P}
				 \newcommand{\bV}{\mathbb V}
\newcommand{\la}{\lambda}
\newcommand{\veps}{\varepsilon}
\newcommand{\vphi}{\varphi}
\newcommand{\Tr}{\operatorname{Tr}}
\newcommand{\diag}{\operatorname{diag}}
\newcommand{\FN}[1]{{\color{red}#1}}
\newcommand{\nc}{{n_u}}
\title[Error analysis of kernel EDMD for prediction and control in the Koopman framework]{Error analysis of kernel EDMD for prediction and control\\ in the Koopman framework}
\begin{document}
\begin{abstract}
Extended Dynamic Mode Decomposition (EDMD) is a popular data-driven method to approximate the Koopman operator for deterministic and stochastic (control) systems. This operator is linear and encompasses full information on the (expected stochastic) dynamics. In this paper, we analyze a kernel-based EDMD algorithm, known as kEDMD, where the dictionary consists of the canonical kernel features at the data points. The latter are acquired by i.i.d.\ samples from a user-defined and application-driven distribution on a compact set. 
We prove bounds on the prediction error of the kEDMD estimator when sampling from this (not necessarily ergodic) distribution. 
The error analysis is further extended to control-affine systems, where the considered invariance of the Reproducing Kernel Hilbert Space is significantly less restrictive in comparison to invariance assumptions on an a-priori chosen dictionary.
\end{abstract}


\author[F.~Philipp]{Friedrich Philipp}
\address{{\bf F.~Philipp}
	Technische Universit\"at Ilmenau, Optimization-based Control Group, Institute for Mathematics, Ilmenau, Germany}
\email{friedrich.philipp@tu-ilmenau.de}

\author[M.~Schaller]{Manuel Schaller}
\address{{\bf M.~Schaller}
	Technische Universit\"at Ilmenau, Optimization-based Control Group, Institute for Mathematics, Ilmenau, Germany}
\email{manuel.schaller@tu-ilmenau.de}

\author[K.~Worthmann]{Karl Worthmann}
\address{{\bf K.~Worthmann}
	Optimization-based Control Group, Institute of Mathematics, Technische Universit\"at Ilmenau, Ilmenau, Germany}
\email{karl.worthmann@tu-ilmenau.de}

\author[S.~Peitz]{Sebastian Peitz}
\address{{\bf S.~Peitz}
	Paderborn University, Department of Computer Science, Data Science for Engineering, Germany}
\email{sebastian.peitz@upb.de}

\author[F.~N\"uske]{Feliks N\"uske}
\address{{\bf F.~N\"uske}
	Max Planck Institute for Dynamics of Complex Technical Systems, Magdeburg, Germany\\
 and Department of Mathematics and Computer Science, Freie Universität Berlin, Germany}
\email{nueske@mpi-magdeburg.mpg.de}

\maketitle

\section{Introduction}

\noindent Extended Dynamic Mode Decomposition (EDMD, see, e.g., \cite{williams15}) and its predecessor DMD~\cite{Schm10} have received much attention in recent years. 
The reason is the capability to predict so-called \textit{observables} along the flow of dynamical (control) systems in a data-driven manner. 
These observable functions are quantities of interests, which can be used for prediction~\cite{klus16}, computation and analysis of key characteristics like attractors~\cite{Kueh21} or metastable sets~\cite{nueske14,klus18_rev,nueske21} as well as control~\cite{kaiser21}, cf.\ the book~\cite{mauroy20} and the recent overview article~\cite{brunton22} for further 
applications. 
The particular appeal of the EDMD approach 
is due to several reasons: First, the implementation of the learning scheme is comparatively simple. 
Second, the resulting model is linear and, last, the Koopman framework provides a mathematically-sound foundation. 

Application of the EDMD method usually requires two steps: First, the linear Koopman operator \cite{koo31} (or its generator) is restricted to a subspace $\calD$ spanned by a finite set of observable functions. Second, the projection of this finite-rank linear operator to $\calD$, also called \textit{compression}, is approximated using samples, i.e., evaluations of the observables 
along the flow of the dynamical system. The first step usually requires the a-priori construction of a dictionary consisting of 
functions on the state space such as, e.g., monomials, which inevitably leads to very large dictionaries for high-dimensional state spaces. 
Consequently, the respective data requirements to learn the 
compression of the Koopman operator grows rapidly due to the curse of dimensionality. 
Data-informed dictionaries using kernel methods have proven their potential to alleviate this issue. 
In kernel EDMD (kEDMD), the dictionary is built up by kernel feature evaluations on the data set~\cite{williams15_kernel,klus20} which are contained in an infinite-dimensional ambient Reproducing Kernel Hilbert Space (RKHS, see, e.g., \cite{pr,sc}). Thus, the a-priori choice and tuning of the dictionary reduces to the mere choice of a suitable kernel. 
As another advantage, one gains access to powerful tools from the rich theory of RKHSs such as the 
kernel trick. For further recent works on the use of reproducing kernels in the context of dynamical systems see also \cite{bblshh,das2021,kostic22,kostic23}.

To derive data-driven approximations of Koopman operator or 
generator, two fundamentally different sampling techniques are common. The first one is to draw independent-and-identically-distributed (i.i.d.) initial values and to propagate the flow by one time step. In this way, the dynamics 
is learned by means of more and more i.i.d~samples 
and results from, e.g., Monte-Carlo integration, can be invoked. 
The second approach, which is very appealing from a practical point of view, is to aqcuire data from one (or several) long trajectories. 
This approach is called ergodic sampling and is widely applied to systems stemming, for instance, from statistical physics problems, see~\cite{lelievre10_fe,lelievre16}. 
As these measurements are clearly not independent, the exploration of the state space has to be ensured by means of properties of the dynamical system such as ergodicity. 
Ergodic sampling is often mandated if the target measure is a high-dimensional multi-modal distribution, where efficient algorithms to generate i.i.d samples are unavailable. It is also appealing due to its ease of implementation and its ability to reveal unknown ergodic measures and long-term behavior of the system. However, i.i.d.~sampling also has a particular advantage: 
In contrast to ergodic sampling, where one usually obtains characteristics of the behavior through the lens of the ergodic invariant measure, e.g., on an attractor of the system, i.i.d.~sampling allows for learning the dynamics on any desired region of the state space. 

For an a-priori chosen dictionary, i.e., the original EDMD approach, the approximation error was analyzed in~\cite{nueske23,zuazua21} for systems governed by ordinary and stochastic differential equations. 
However, for kEDMD, 
this analysis is so far only available if some invariant measure exists on the whole state space, see~\cite{PhilScha23}. 
In this work, we complement these works by extending the results of~\cite{PhilScha23} on the kernel- and data-based approximation of the Koopman operator to a much more flexible setting without (potentially restrictive) invariance assumptions on the sets and/or measures under consideration. 
Our main tool is a probabilistic error bound for (cross-)covariance operators in the Hilbert-Schmidt norm using Hoeffding's inequality. 
The non-invariance of the considered subset of the state space leads to Koopman operators which no longer map the function space into itself, but onto another space in general. 
Thus, the Koopman operators do not form a semigroup as it is the case under invariance assumptions. 
Hence, we generalize the definition of the Koopman generator to establish our findings without modifying the kernel-based EDMD algorithm.

To model control systems in the Koopman framework, a popular approach consists of seeking a linear surrogate model in a lifted state space by means of augmenting the state by the control, cf.\ \cite{KordMezi18}. 
Bi-linear models are an alternative~\cite{WillHema16,Sura16}. This is motivated by the fact that 
the Koopman generator inherits control affinity and can, thus, be constructed using Koopman generators of autonomous systems. While loosing linearity of the data-driven surrogate model, the bi-linear approach has proven to be favorable for nonlinear systems with state-control couplings~\cite{PeitOtto2020}. 
For EDMD, a respective analysis of the approximation error for dynamical control systems can be conducted by splitting the error into its two sources, cf.\ \cite{nueske23} for the estimation error and \cite{SchaWort22} for the projection error. 
In these works, error bounds for the generator approximation of autonomous systems were leveraged exploiting the control-affine structure. 
In addition, for set-point stabilization, an error bound proportional to the size of the state and the control was established in~\cite{BoldGrun23}.
On the one hand, these error bounds were leveraged in~\cite{StraScha23} to design a robust controller of the bi-linear system using techniques based on linear matrix inequalities. On the other hand, they were exploited in~\cite{BoldGrun23} to rigorously ensure practical asymptotic stability of EDMD-based Model Predictive Control using cost controllability~\cite{CoroGrun20} of the original system. In this paper, we provide the first error bounds for the approximation of control-affine systems using kernel-based EDMD. Whereas many existing results concerning EDMD for control require the Koopman invariance of the finite-dimensional linear span of the pre-defined dictionary~\cite{GoswPale21,BoldGrun23}, we are able to improve our approximation bounds by assuming that the Koopman operator leaves the (usually infinite-dimensional) ambient RKHS invariant. In contrast to the standard EDMD case with a finite-dimensional dictionary space, this assumption appears to be much more naturally satisfied in many relevant cases.


\section{Summary of the key results}\label{s:summary}
We briefly highlight our two main findings. 
Our first result considers error bounds for kernel EDMD (kEDMD) of 
systems governed by a stochastic differential equation
\begin{equation*}
    dX_t = b(X_t)\,dt + \sigma(X_t)\,dW_t
\end{equation*}
for initial values contained in a compact set $\calX$. The Koopman operator is defined by the conditional expectation of observables along the flow, which is assumed to be contained in possibly unbounded set~$\calY$. 
More precisely, for an observable function $\psi : \calY \to \R$ and for all $x\in \calX$, we define
\begin{align}\label{eq:koop}
    (K^t\psi)(x) = \bE\big[\psi(X_t)|X_0=x\big]. 
\end{align}
We explicitly allow for $\sigma\equiv 0$ such that ordinary differential equations are included. To approximate the Koopman operator from data, we collect samples $(x_k,y_k)_{k=0}^{m-1}$, where $x_k \in \calX$ is distributed according to a chosen 
Borel measure~$\mu$ on~$\calY$ such that $\mu (\calX) = 1$ holds and $y_k \sim \rho_t(x_k,\cdot)$ denotes the corresponding successor state. Here, $\rho_t(x,\cdot)$ is the probability distribution associated with the transition kernel of the process, i.e., $\rho_t(x,A) = \bP(X_t\in A\,|\,X_0=x)$. 
As we are interested in approximations of the Koopman operator on Reproducing Kernel Hilbert Spaces (RKHS) on $\calX$ and $\calY$ denoted by $\bH_\calX$ and $\bH_\calY$, respectively, the data matrices $\Psi(X)$ and $\Psi(Y)$ are then formed by kernel evaluations at the data points, i.e.,
\[
\Psi(X) = \big(k(x_i,x_j)\big)_{i,j=0}^{m-1}
\qquad\text{and}\qquad
\Psi(Y) = \big(k(x_i,y_j)\big)_{i,j=0}^{m-1}.
\]
Correspondingly, the kEDMD estimator $\wh K_r^{m,t}$ of the Koopman operator $K^t$ on the function space $\calD$ spanned by the kernel features, i.e., $\calD = \linspan\{k(x_0,\cdot),\ldots,k(x_{m-1},\cdot)\}$, is given by 
\begin{align*}
\wh K_r^{m,t}:\calD \to \calD  \quad \text{with\ matrix\ representation}\quad  \wh M^{m,t}_r = [\Psi(X)]^\dagger_r \Psi(Y)^\top,    
\end{align*}
where $[\Psi(X)]^\dagger_r$ is a pseudo-inverse of a rank-$r$ truncation of $\Psi(X)$ to avoid ill-conditioning.

\smallskip
\noindent \textbf{First main result: error bound for autonomous systems}, cf.\ Theorem~\ref{t:main}. Let an error bound $\varepsilon > 0$ and a probabilistic tolerance $\delta \in (0,1)$ be given. Then, under suitable assumptions to be specified later, there is a sufficient amount of data $m_0\in \N$ such that for all $m \geq m_0$,
\begin{equation}\label{eq:intest1}
\|P_rK^t - \wh K_r^{m,t}\|_{\bH_\calY\to L^2(\calX;\mu)}\,\le\,c_r\veps
\end{equation}
holds with probability $1-\delta$. Here, $c_r\geq 0$ is a constant depending on the truncation order $r\in \N$, and $P_r$ is an orthogonal projection onto the first $r$ components of the Mercer orthogonal basis of $L^2(\calX;\mu)$.

Further, if the chosen RKHS is invariant under the Koopman flow, then, with probability at least $1-\delta$,
\begin{equation*}
\|K^t - \wh K_r^{m,t}\|_{\bH_\calY\to L^2(\calX;\mu)}\,\le\,\sqrt{\la_{r+1}}\,\|K^t\|_{\bH_\calY\to\bH_\calX} + c_r\veps
\end{equation*}
with Mercer eigenvalues $\lambda_r\to 0$ for $r\to \infty$.

As a second result, we provide bounds for kEDMD-based predictions of control-affine systems
\begin{align}\label{eq:contsys}
\dot{x}(t) = f(x(t)) + \sum_{i=1}^{\nc} u_i(t) g_i(x(t))
\end{align}
with a piecewise constant control function taking values in a compact set $\calU\subset \R^\nc$, resulting e.g.~from sampling with zero-order hold. For a constant control function $u(t) \equiv u \in \calU$, 
the counterpart of the Koopman operator~\eqref{eq:koop} 
is given by
\begin{align}\label{eq:koopcont}
    (K_u^t \psi)(x^0) := \psi\big(x(t;x^0,u)\big),
\end{align}
where $x^0\in\calX$ 
and $x(\cdot;x^0,u)$ is the solution to \eqref{eq:contsys} with $x(0) = 0$. Again, $\calY$ is chosen in a way such that $x(t;x^0,u) \in \calY$ for $x^0\in\calX$. 
As the generator of the Koopman operator~\eqref{eq:koopcont} inherits the control affine structure of the underlying control system~\eqref{eq:contsys}, we show that the Koopman operator is approximately control affine (for small times $t\geq 0$). Then, we define its approximation by
\begin{align}\label{eq:contaffinecon}
\wh K^{m,t}_{r,u} := \wh K^{m,t}_{r} + \sum_{i=1}^{\nc} \frac{u_i}{\gamma_i}\big(\wh{K}^{m,t}_{r,\gamma_i e_i} - \wh{K}^{m,t}_{r}\big),
\end{align}
where $\wh K_{r,\gamma_ie_i}^{m,t}$, $\wh{K}^{m,t}_{r}$ are estimators of the Koopman operators corresponding to~\eqref{eq:contsys} for the constant controls $u=\gamma_ie_i$, $i=1,\ldots,\nc$, and $u=0$, respectively, using the kEDMD method as outlined above for autonomous systems.

\smallskip
\noindent \textbf{Second main result: error bound for control systems}, cf.\ Theorem~\ref{t:control}.
Let an error bound $\varepsilon > 0$, a probabilistic tolerance $\delta \in (0,1)$, and a control $u\in \calU\subset \R^m$, $\calU$ compact be given. Then, under suitable assumptions to be specified later, there is a sufficient amount of data $m_0\in \N$ such that for all $m \geq m_0$,
\[
\|P_rK_u^t - \wh K_{r,u}^{m,t}\|_{\bH_\calY\to L^2(\calX;\mu)}\,\le\,c_r\veps + t^2\cdot C_k\wh\la_r^{-1}.
\]
The first term resembles the right-hand side of~\eqref{eq:intest1} and the second term stems from the \textit{approximate control-affinity} of the Koopman operator. 

The outline of this paper is as follows. 
In the following 
section, the reader is introduced to the setting and fundamentals of this work, such as stochastic differential equations, Reproducing Kernel Hilbert spaces, Koopman operators, and kEDMD including the main assumptions. 
In Section \ref{sec:res1} we present and prove our main result, Theorem \ref{t:main}, on the approximation error on the kEDMD estimator for autonomous systems. In Section \ref{s:control} we first recall the affine dependence of the Koopman generator on the control for control-affine systems and show that the same holds approximately (for small sampling times) for the Koopman operator, resulting in a natural estimator via an affine combination of autonomous operators. Our second main result, Theorem \ref{t:control}, provides an error bound on this estimator. Finally, Section \ref{s:num} concludes the paper with an exhibition of numerical results.

\medskip
\section{Setting}\label{sec:setting}
In the following, we introduce the reader to the setting considered in this paper which involves stochastic differential equations (Section~\ref{subsec:sde}), Koopman operators (Section~\ref{subsec:koopman}), symmetric positive definite kernels and their associated reproducing kernel Hilbert spaces (Section~\ref{subsec:rkhs}), and the kEDMD approximation method (Section~\ref{subsec:kEDMD}). In Section \ref{subsec:ass} we provide our assumptions and discuss them in detail.


\subsection{Stochastic differential equations}\label{subsec:sde}
Let a stochastic differential equation (SDE) with drift vector field $b : \R^d\to\R^d$ and diffusion matrix field $\sigma : \R^d\to\R^{d\times d}$ be given, i.e.,
\begin{equation}\label{e:SDE0}
dX_t = b(X_t)\,dt + \sigma(X_t)\,dW_t,
\end{equation}
where $W_t$ is $d$-dimensional Brownian motion. We assume that both $b$ and $\sigma$ are Lipschitz-continuous. Then \cite[Theorem 5.2.1]{oe} guarantees the existence of a unique solution $(X_t)_{t\ge 0}$ to \eqref{e:SDE0}. The case $\sigma\equiv 0$ will be referred to as ``the deterministic case'' as the SDE then reduces to the ODE $\dot x = b(x)$.

The solution $(X_t)_{t\ge 0}$ of the SDE \eqref{e:SDE0} is a continuous time-homogeneous Markov process, whose transition kernel is denoted by $\rho_t : \R^d\times \calB\to\R$, where $\calB$ is the Borel $\sigma$-algebra on $\R^d$. Then $\rho_t(x,\cdot)$ is a probability measure for all $x\in\R^d$, and for each $A\in\calB$ we have that $\rho_t(x,A)$ is a representative of the conditional probability for $A$ containing $X_t$, given $X_0=x$, i.e.,
$$
\rho_t(x,A) = \bP(X_t\in A|X_0=x)\quad\text{for $\bP^{X_0}$-a.e. $x\in \R^d$},
$$
where $\bP^{X_0}$ denotes the law of $X_0$. Note that the law of $X_t$ is determined by
\begin{align}\label{e:inv_distr}
\bP^{X_t}(A) = \int\rho_t(x,A)\,d\bP^{X_0}(x),\qquad A\in\calB.
\end{align}
It is well known \cite[Section 1.3]{bakry} that the transition kernel satisfies the so-called {\em Chapman-Kolmogorov equation}
\begin{equation}\label{e:cke}
\rho_{t+s}(x,A) = \int\rho_s(y,A)\,\rho_t(x,dy),\qquad A\in\calB,\;x\in\R^d\;s,t\ge 0.
\end{equation}
As it is usual, we agree to write $\rho_t(x,dy)$ for $d\rho_t(x,\cdot)(y)$.

\subsection{Assumptions}\label{subsec:ass}
In many works it is assumed that there is some set $\calX \subseteq \R^d$ which is invariant under the flow of the SDE and that there is an invariant distribution $\pi$ on $\calX$ such that the solution process $X_t$ is stationary with respect to $\pi$, i.e., $\bP^{X_t}=\pi$ for all $t\ge 0$. In view of \eqref{e:inv_distr}, this is equivalent to
\[
\int\rho_t(x,A)\,d\pi(x) = \pi(A),\qquad A\in\calB_\calX,\,t\ge 0.
\]
Here, $\calB_\calX$ denotes the Borel sigma algebra on $\calX$.

However, although the Krylov–Bogolyubov Theorem \cite{dpz} ensures the existence of such an invariant measure $\pi$ for a large class of SDEs, both the set $\calX$ as the measure $\pi$ are often unknown in practice; even if $\pi$ is known, generating independent and identically distributed (i.i.d) samples with respect to it remains, in general, a challenging problem. 

In this paper, we relax both conditions in a way that the objects become both more flexible and more accessible for applications:
\begin{itemize}
\item $\calX := \ol{\Omega}$, where $\Omega\subset\R^d$ is a bounded and connected open set;
\item $T>0$ is a finite time horizon;
\item $\calY\subset\R^d$ is a closed set such that $\calX\subset\calY$ and
\begin{align}\label{e:stay_in_Y}
\forall t\ge 0\,\forall x\in\calX : \rho_t(x,\calY) = 1.
\end{align}
\item $\mu$ is a finite Borel measure on $\calY$ such that $\mu(\calX) = 1$ and there exists a constant $L>0$ such that
\begin{align}\label{e:LipMeas}
\int_\calX\rho_t(x,A)\,d\mu(x)\,\le\, L\cdot\mu(A),\quad A\in\calB_\calY,\,t\in [0,T].
\end{align}
\end{itemize}
The set $\calX$ will later serve as the set, which initial data for the process is sampled from. The assumption~\eqref{e:stay_in_Y} then assures that a subsequent sample after time $t\in (0,T]$ of the process remains in $\calY$ almost surely. Clearly, $\calY = \R^d$ is a possible choice.

Let us discuss the above assumptions in the following remark.

\begin{rem}\label{r:on_assumptions}
{\bf (a)} In the case $\calY = \calX$, the set $\calX$ is invariant under the stochastic flow of the SDE \eqref{e:SDE0}, i.e.,
$$
\forall t\ge 0\,\forall x\in\calX : \rho_t(x,\calX) = 1.
$$
A sufficient condition on the coefficients of the SDE \eqref{e:SDE0} for the invariance of $\calX$ under the SDE flow is given by 
\begin{subequations}
\begin{align}
\label{e:deter_outside}
\sigma(x)&=0\quad\text{for all $x\in\R^d\backslash\calX$ and}\\
\label{e:IP_condition}
b(x)^\top\nu(x) &\leq 0 \quad \text{for all $x\in\partial\calX$ and all outer normal vectors $\nu(x)\in \R^n$},
\end{align}
\end{subequations}
where an outer normal vector of $\calX$ at $x\in\partial\calX$ is any vector $\nu\in\R^d$, $\nu\neq 0$, such that the open ball with center $x + \nu$ and radius $\|\nu\|_2$ and $\calX$ are disjoint\footnote{If $\Omega$ has a $C^1$-boundary, the outer unit vector is unique. In case of Lipschitz continuous boundary, as a consequence of Rademacher's theorem \cite[Theorem 3.2]{EvansGariepy}, it is unique up to a set of Lebesgue measure zero on the boundary.}. This follows by an application of the Bony-Brezis (also known as Nagumo) theorem, which is proven in \cite[Theorem~10.XVI, p.\ 117f]{Walt1998} for the deterministic case under the \textit{tangent or interior pointing}-condition~\eqref{e:IP_condition}. In the stochastic setting, continuity of solutions to the SDE ensures that any trajectory leaving $\calX$ has to cross the boundary. The condition~\eqref{e:deter_outside} implies that the noise has no influence at the boundary ($\sigma$ is continuous) and in the exterior of $\calX$ such that the SDE reduces to an ordinary differential equation. Hence, the proof in  \cite{Walt1998} for the deterministic setting also applies to the SDE case.

\smallskip\noindent
{\bf (b)} Note that \eqref{e:LipMeas} means that the push-forward measures $\nu_t$ of $\mu$ under the dynamics, defined on $\calY$ by $\nu_t(A) = \int_\calX\rho_t(x,A)\,d\mu(x)$, $t\in [0,T]$, are uniformly Lipschitz-continuous w.r.t.\ $\mu$, i.e., $\nu_t(A)\le L\cdot\mu(A)$ for all $A\in\calB_\calY$, cf.\ \cite[Definition 3.1]{phil17}. In consequence, $\mu$-integrable functions on $\calY$ are also $\nu_t$-integrable, and for $\psi\in L^1(\calY;\mu)$ we have
\begin{equation}\label{e:LipMeas_Func}
\int_\calX\int_\calY|\psi(y)|\,\rho_t(x,dy)\,d\mu(x) = \int_\calY|\psi|\,d\nu_t\,\le\,L\int_\calY|\psi|\,d\mu.
\end{equation}

\smallskip\noindent
{\bf (c)} In the deterministic case (i.e., $\sigma\equiv 0$), \eqref{e:stay_in_Y} means that $F_t(x)\in\calY$ for all $t\in [0,T]$ and all $x\in\calX$, where $F_t$ denotes the flow of the ODE $\dot x = b(x)$. Moreover, the condition \eqref{e:LipMeas} is equivalent to
\begin{align}\label{e:LipMeas_det}
\mu(F_t^{-1}(A))\le L\cdot\mu(A),\qquad A\in\calB_\calY,\,t\in [0,T].
\end{align}
In Section \ref{a:deterministic} we show that this condition is satisfied for a large class of systems and measures.
\end{rem}

\subsection{Koopman operators}\label{subsec:koopman}
For $p\in [1,\infty]$, we let $\|\cdot\|_{p,\calY}$ denote the $L^p(\calY;\mu)$-norm. In particular, for $p<\infty$,
$$
\|\psi\|_{p,\calY} = \left(\int_\calY|\psi(y)|^p\,d\mu(y)\right)^{1/p}.
$$
Moreover, we define $\<\phi,\psi\>_{2,\calY} := \int_\calY\phi(y)\ol{\psi(y)}\,d\mu(y)$ for $\phi,\psi\in L^2(\calY;\mu)$. Similarly, we define the norm $\|\cdot\|_{p,\calX}$ on $L^p(\calX;\mu)$ and the scalar product $\<\cdot\,,\cdot\>_{2,\calX}$ on $L^2(\calX;\mu)$, respectively.

Let $B(\calY)$ denote the set of all bounded Borel-measurable functions on $\calY$. For $t\in [0,T]$, we define the {\em Koopman operator} $K^t$ on $B(\calY)$ associated with the SDE~\eqref{e:SDE0} by
$$
(K^t\psi)(x) = \int_\calY\psi(y)\,\rho_t(x,dy) = \bE\big[\psi(X_t)|X_0=x\big],\quad x\in\calX,
$$
for $\psi\in B(\calY)$.

\begin{rem}
(a) For the definition of $K^t$ to carry over to spaces of equivalence classes of functions that coincide a.e.\ on $\calY$, it must be ensured that $K^t\psi_1 = K^t\psi_2$ $\mu$-a.e.\ on $\calX$ whenever $\psi_1 = \psi_2$ $\mu$-a.e.\ on $\calY$. This is equivalent to the absolute continuity of the measure $\nu_t$ w.r.t.\ $\mu$ (see Remark \ref{r:on_assumptions}), which is a relaxation of \eqref{e:LipMeas}. However, it is easy to see that $K^t$ maps $L^1(\mu)$ (boundedly) into itself if and only if $\nu_t\le L\mu$ for some $L>0$, i.e., if \eqref{e:LipMeas} holds.

\smallskip\noindent
(b) Usually, the Koopman operator of a dynamical system on $\calX$ is considered as a linear operator from a space of functions on $\calX$ {\em into itself}. The price we pay by relaxing the invariance condition on $\calX$ is that our version of the Koopman operator is a linear map between {\em two different spaces}.

\smallskip\noindent
(c) Let us briefly consider the deterministic case. Here, we have $\rho_t(x,A) = \delta_{F_t(x)}(A)$, and thus $(K^t\psi)(x) = \int_\calY\psi(y)d\delta_{F_t(x)}(y) = \psi(F_t(x))$ for $t\in [0,T]$, cf.\ \eqref{e:stay_in_Y}. This complies with the usual definiton of the Koopman operator for deterministic dynamical systems as a composition operator.
\end{rem}

A proof of the following proposition can be found in the Appendix Section \ref{a:proof_Kt_bounded}.

\begin{prop}\label{p:Kbounded}
Let $p\in [1,\infty]$. For every $t\ge 0$, the operator $K^t$ extends uniquely to
\begin{itemize}
    \item a contraction from $L^\infty(\calY;\mu)$ to $L^p(\calX;\mu)$;
    \item a bounded operator from $L^p(\calY;\mu)$ to $L^p(\calX;\mu)$.
\end{itemize}
If $p<\infty$, for $\psi\in L^p(\calY;\mu)$ we have $K^t\psi\to\psi|_\calX$ in $L^p(\calX;\mu)$ as $t\to 0$.
\end{prop}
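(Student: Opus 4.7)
The plan is to extract both norm bounds directly from the defining integral, and then handle the $t\to 0$ limit by approximation with continuous bounded functions. The central observation is that by \eqref{e:stay_in_Y} the restriction of $\rho_t(x,\cdot)$ to $\calY$ is a probability measure for every $x\in\calX$, which makes Jensen's inequality the right tool.

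For $\psi\in B(\calY)$, I would first estimate
\[
|(K^t\psi)(x)|\,\le\,\int_\calY|\psi(y)|\,\rho_t(x,dy)\,\le\,\|\psi\|_{\infty,\calY};
\]
combined with $\mu(\calX)=1$, this yields $\|K^t\psi\|_{p,\calX}\le\|\psi\|_{\infty,\calY}$ for every $p\in[1,\infty]$, giving the $L^\infty$-contraction. For the $L^p\to L^p$ bound with $p<\infty$, Jensen's inequality applied to the convex function $s\mapsto|s|^p$ yields
\[
|(K^t\psi)(x)|^p\,\le\,\int_\calY|\psi(y)|^p\,\rho_t(x,dy),
\]
and integration over $\calX$ against $\mu$ combined with \eqref{e:LipMeas_Func} gives $\|K^t\psi\|_{p,\calX}\le L^{1/p}\|\psi\|_{p,\calY}$; the case $p=\infty$ of the $L^p\to L^p$ bound coincides with the contraction statement. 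Applied to $\psi_1-\psi_2$, the same estimate shows that $K^t\psi$ only depends on the $\mu$-equivalence class of $\psi$, so $K^t$ descends to a bounded operator $L^p(\calY;\mu)\to L^p(\calX;\mu)$; uniqueness of the extension then follows from density of $B(\calY)$ in $L^p(\calY;\mu)$ for $p<\infty$ and is immediate for $p=\infty$ since every $L^\infty$-class has a bounded Borel representative.

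For the continuity at $t=0$ with $p<\infty$, I would proceed by a three-step approximation. For $\phi\in C_b(\calY)$, Lipschitz-continuity of the coefficients $b,\sigma$ ensures almost-surely continuous sample paths, so that starting from $X_0=x\in\calX$ one has $X_t\to x$ almost surely as $t\to 0$; dominated convergence yields pointwise $(K^t\phi)(x)\to\phi(x)$ on $\calX$, and the uniform bound $|K^t\phi|\le\|\phi\|_\infty$ together with $\mu(\calX)<\infty$ upgrades this (again by dominated convergence) to $K^t\phi\to\phi|_\calX$ in $L^p(\calX;\mu)$. For general $\psi\in L^p(\calY;\mu)$, I would pick $\phi\in C_b(\calY)$ with $\|\psi-\phi\|_{p,\calY}<\veps$ (standard density, as $\calY\subseteq\R^d$ is closed and $\mu$ is a finite Borel measure) and run an $\veps/3$-argument, combining the uniform bound $\|K^t\|_{L^p\to L^p}\le L^{1/p}$ from above with $\|\psi|_\calX-\phi|_\calX\|_{p,\calX}\le\|\psi-\phi\|_{p,\calY}$, valid since $\calX\subset\calY$ and both norms are built from $\mu$.

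I expect the main obstacle to be the pointwise convergence $(K^t\phi)(x)\to\phi(x)$ in the genuinely stochastic case: it hinges on almost-sure path continuity of the SDE solution rather than just on weak continuity of $\rho_t(x,\cdot)$ at $t=0$, and so appeals to path regularity of $(X_t)_{t\ge 0}$. Everything else is a routine combination of Jensen's inequality, the Lipschitz-type bound \eqref{e:LipMeas_Func}, and a density argument.
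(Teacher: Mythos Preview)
Your proposal is correct and follows essentially the same approach as the paper: the pointwise bound for the $L^\infty$ contraction, Jensen's inequality (the paper calls it H\"older, but the content is the same) combined with \eqref{e:LipMeas_Func} for the $L^p\to L^p$ bound, and a.s.\ path continuity plus dominated convergence plus density for the $t\to 0$ limit. The only cosmetic differences are that the paper approximates by $C_c(\calY)$ rather than $C_b(\calY)$, and does not spell out the well-definedness on equivalence classes as you do.
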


\begin{rem}
If $\calX$ is invariant under the SDE flow (i.e., $\calY = \calX$), the operators $K^t$ satisfy the semigroup property $K^{t+s} = K^tK^s$ for $s,t\ge 0$, which is a simple consequence of the Chapman-Kolmogorov equation \eqref{e:cke}. If $\mu$ is an invariant measure, $(K^t)_{t\ge 0}$ is a $C_0$-semigroup of contractions on\footnote{In what follows, $L(\calH,\calK)$ denotes the set of all bounded linear operators between Hilbert spaces $\calH$ and $\calK$. As usual, we also set $L(\calH) := L(\calH,\calH)$.} $L(L^p(\calX;\pi))$, where $p\in [1,\infty)$. However, if $\mu$ is not invariant but at least satisfies \eqref{e:LipMeas} on each interval $[0,T]$ (with the constants $L$ depending on $T$), the semigroup $(K^t)_{t\ge 0}$ might not be uniformly bounded. An example is given in Appendix \ref{s:unbounded}.
\end{rem}


\subsection{Reproducing kernel Hilbert spaces}\label{subsec:rkhs}
In what follows, let $k : \calY\times\calY\to\R$ be a continuous and bounded symmetric positive definite kernel, that is, we have $k(x,y) = k(y,x)$ for all $x,y\in \calY$ and
$$
\sum_{i,j=1}^m k(x_i,x_j)c_ic_j\,\ge\,0
$$
for all choices of $x_1,\ldots,x_m\in \calY$ and $c_1,\ldots,c_m\in\R$. It is well known that $k$ generates a so-called {\em reproducing kernel Hilbert space} (RKHS) \cite{a,bt,pr} $(\bH_\calY,\<\cdot\,,\cdot\>_{\bH_\calY})$ of bounded continuous functions on~$\calY$, such that for $\psi \in \bH_\calY$ the {\em reproducing property}
\begin{equation}\label{e:reproducing_property}
\psi(x) = \<\psi,\Phi_\calY(x)\>_{\bH_\calY},\qquad x\in \calY,
\end{equation}
holds, where $\Phi_\calY : \calY\to\bH_\calY$ denotes the so-called {\em feature map} corresponding to the kernel $k$, i.e.,
$$
\Phi_\calY(x) = k(x,\cdot),\qquad x\in \calY.
$$
In the sequel, we shall denote the norm on $\bH_\calY$ by $\|\cdot\|_{\bH_\calY}$ and the kernel diagonal by $\vphi$:
$$
\vphi(x) = k(x,x),\qquad x\in \calY.
$$
Then for $x\in \calY$ we have
$$
\|\Phi_\calY(x)\|_{\bH_\calY}^2 = \<\Phi_\calY(x),\Phi_\calY(x)\>_{\bH_\calY} = \<k(x,\cdot),k(x,\cdot)\>_{\bH_\calY} = k(x,x) = \vphi(x).
$$
We shall frequently make use of the following estimate:
$$
|k(x,y)| = |\<\Phi_\calY(x),\Phi_\calY(y)\>_{\bH_\calY}|\le\|\Phi_\calY(x)\|_{\bH_\calY}\|\Phi_\calY(y)\|_{\bH_\calY} = \sqrt{\vphi(x)\vphi(y)}.
$$
For example, it shows that $\|k\|_\infty = \|\vphi\|_\infty$.

In contrast to other works, where the kernel is induced by an observation map \cite{das2021} or is specifically designed by taking samples of the dynamics into account \cite{bblshh}, we admit any continuous kernel on $\calY$ which satisfies the following

\medskip\noindent
{\bf Compatibility assumptions:}

\begin{enumerate}
\item[(C1)] If $\psi\in L^2(\calY;\mu)$ such that $\int_\calY\!\int_\calY  k(x,y)\psi(x)\psi(y)\,d\mu(x)\,d\mu(y) = 0$, then $\psi=0$ $\mu$-a.e.
\item[(C2)] If $\psi\in\bH_\calY$ such that $\psi(y)=0$ for $\mu$-a.e.\ $y\in\calY$, then $\psi=0$.
\end{enumerate}

\smallskip
\noindent Since the functions in $\bH_\calY$ are continuous, note that condition (C2) holds if the measure $\mu$ is absolutely continuous with respect to Lebesgue measure with positive density. The condition allows us to consider $\bH_\calY$ as a subspace of $L^2(\calY;\mu)$, i.e., the inclusion operator $S_\calY : \bH_\calY\to L^2(\calY;\mu)$, defined by
\[
S_\calY\psi := [\psi],\quad\psi\in\bH_\calY,
\]
is injective. Here, $[\psi]$ denotes the equivalence class w.r.t.\ a.e.\ equality. It is easily seen that $S_\calY$ is bounded with norm $\|S_\calY\|\le \|\vphi\|_{1,\calY}^{1/2}$. Hence, $\bH_\calY$ is continuously embedded in $L^2(\calY;\mu)$, i.e.,
\[
\|\psi\|_{2,\calY}\lesssim\|\psi\|_{\bH_\calY},\quad\psi\in\bH_\calY.
\]
Furthermore, the condition {\rm (C1)} is equivalent to the density of $\bH_\calY$ in $L^2(\calY;\mu)$, see \cite{PhilScha23}. Hence, $\bH_\calY$ is continuously and densely embedded in $L^2(\calY;\mu)$.

In fact, the embedding is even compact as $S_\calY$ is a {\em Hilbert-Schmidt operator} with Hilbert-Schmidt norm $\|S_\calY\|_{HS}^2 = \|\vphi\|_{1,\calY}$, see \cite{PhilScha23}. For this, recall that an operator $T\in L(\calH)$ on a Hilbert space $\calH$ is {\em trace class} if for some (and hence for each) orthonormal basis (ONB) $(e_j)_{j\in\N}$ of $\calH$ we have $\sum_{j=1}^\infty\<(T^*T)^{1/2}e_i,e_i\> < \infty$. An operator $S\in L(\calH,\calK)$ between Hilbert spaces $\calH$ and $\calK$ is said to be {\em Hilbert-Schmidt} \cite[Chapter III.9]{gk} if $S^*S$ is trace class, i.e., $\|S\|_{HS}^2 := \sum_{j=1}^\infty\|Se_i\|^2 < \infty$ for some (and hence for each) ONB $(e_j)_{j\in\N}$ of $\calH$. Trace-class and Hilbert-Schmidt operators are compact operators. In particular, their spectrum consists of discrete eigenvalues with the origin as the only accumulation point. As $S_\calY$ is Hilbert-Schmidt, it follows that the {\em covariance operator}
$$
C_\calY :=S_\calY^*S_\calY\,\in\,L(\bH_\calY)
$$
is trace-class. Mercer's theorem shows the existence of a particular ONB $(e_j)_{j=1}^\infty$ of $L^2(\calY;\mu)$ composed of eigenfunctions of $S_\calY S_\calY^*$, which we shall henceforth call the {\em Mercer basis} corresponding to the kernel~$k$.

\begin{thm}[Mercer's Theorem \cite{rn}]\label{t:mercer}
There exists an ONB $(e_j)_{j=1}^\infty$ of $L^2(\calY;\mu)$ comprised of eigenfunctions of $S_\calY S_\calY^*$ with corresponding eigenvalues $\la_j > 0$ such that $\sum_{j=1}^\infty\la_j = \|\vphi\|_{1,\calY} < \infty$. Furthermore, $(f_j)_{j=1}^\infty$ with $f_j = \sqrt{\la_j}e_j$ constitutes an ONB of $\bH_\calY$ consisting of eigenfunctions of $C_\calY$ with corresponding eigenvalues $\la_j$. Moreover, for all $x,y\in \calY$,
$$
k(x,y) = \sum_jf_j(x)f_j(y) = \sum_j\la_je_j(x)e_j(y),
$$
where the series converges absolutely.
\end{thm}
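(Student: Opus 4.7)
The plan is to deduce the theorem from the spectral theorem for the compact, positive, self-adjoint operator $T_\calY := S_\calY S_\calY^*$ on $L^2(\calY;\mu)$, then to transport the resulting spectral decomposition back to $\bH_\calY$ via $S_\calY^*$, and finally to obtain the pointwise kernel expansion from the reproducing property. Since $S_\calY$ is Hilbert--Schmidt (as recalled just before the theorem), $T_\calY$ is trace-class, hence compact. Condition (C2) gives $\ker S_\calY = \{0\}$, and (C1) gives $\overline{\ran S_\calY} = L^2(\calY;\mu)$, so that $\ker T_\calY \subseteq \ker S_\calY^* = (\ran S_\calY)^\perp = \{0\}$. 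The spectral theorem then furnishes an ONB $(e_j)_{j=1}^\infty$ of $L^2(\calY;\mu)$ of eigenfunctions of $T_\calY$ with strictly positive eigenvalues $\la_j$, and the trace identity yields $\sum_j \la_j = \Tr(T_\calY) = \|S_\calY\|_{HS}^2 = \|\vphi\|_{1,\calY} < \infty$.

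\textbf{Step 2 (transport to $\bH_\calY$ and pointwise identification).} I would define $f_j := \la_j^{-1/2} S_\calY^* e_j \in \bH_\calY$. A short computation gives $\langle f_j, f_k\rangle_{\bH_\calY} = \la_j^{-1/2}\la_k^{-1/2}\langle T_\calY e_j, e_k\rangle_{2,\calY} = \delta_{jk}$ and $C_\calY f_j = \la_j^{-1/2} S_\calY^* T_\calY e_j = \la_j f_j$. Since $\ran S_\calY^*$ is dense in $\bH_\calY$ (because $\ker S_\calY = \{0\}$) and $(e_j)$ is total in $L^2(\calY;\mu)$, the family $(f_j)$ is an ONB of $\bH_\calY$ diagonalising $C_\calY$ with the same eigenvalues $\la_j$. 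For the pointwise identification $f_j = \sqrt{\la_j}\,e_j$, the reproducing property yields the integral representation
\begin{equation*}
(S_\calY^*\phi)(x) = \langle S_\calY^*\phi, \Phi_\calY(x)\rangle_{\bH_\calY} = \int_\calY k(x,y)\,\phi(y)\,d\mu(y), \quad \phi\in L^2(\calY;\mu),\, x\in\calY,
\end{equation*}
which is continuous in $x$ by dominated convergence (using $|k(x,y)|\le\sqrt{\vphi(x)\vphi(y)}$). Choosing $\phi=e_j$ produces a canonical continuous representative of the $L^2$-class $\la_j e_j$; adopting this representative for each $e_j$ yields $f_j(x) = \sqrt{\la_j}\,e_j(x)$ for every $x\in\calY$.

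\textbf{Step 3 (kernel expansion and absolute convergence).} For fixed $x\in\calY$, I would expand the feature map $\Phi_\calY(x) = k(x,\cdot)\in\bH_\calY$ in the ONB $(f_j)$ and use $\langle \Phi_\calY(x), f_j\rangle_{\bH_\calY} = f_j(x)$ to obtain
\begin{equation*}
k(x,\cdot) = \sum_j f_j(x)\, f_j \quad \text{in } \bH_\calY.
\end{equation*}
Applying the bounded evaluation functional at $y\in\calY$ (i.e.\ the inner product with $\Phi_\calY(y)$) then delivers the pointwise identity $k(x,y) = \sum_j f_j(x) f_j(y)$, and Cauchy--Schwarz combined with Parseval $\sum_j f_j(x)^2 = \|\Phi_\calY(x)\|_{\bH_\calY}^2 = \vphi(x)$ gives the absolute bound $\sum_j |f_j(x) f_j(y)| \le \sqrt{\vphi(x)\vphi(y)}$. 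The only delicate point of the whole argument is the pointwise identification in Step 2: the $L^2$-eigenfunctions $e_j$ are initially mere equivalence classes, yet Mercer's identity is claimed to hold everywhere on $\calY\times\calY$. The integral formula for $S_\calY^*$ derived from the reproducing property is precisely the device that reconciles the $L^2$-spectral picture with the pointwise kernel identity, after which Steps 1 and 3 are standard Hilbert-space manipulations.
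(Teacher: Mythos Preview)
Your proof is correct. The paper does not actually prove this statement: it is quoted as Mercer's Theorem with a citation to Riesz--Nagy and no argument is supplied, so there is no in-paper proof to compare against. Your route---spectral theorem for the trace-class operator $S_\calY S_\calY^*$ on $L^2(\calY;\mu)$, transport to $\bH_\calY$ via $S_\calY^*$, and pointwise kernel expansion through the reproducing property---is the standard modern derivation and handles the present setting (where $\calY$ need not be compact but $k$ is bounded and $\mu$ finite) cleanly. The one genuinely delicate point you correctly flag is the passage from $L^2$-eigenclasses to pointwise identities; your use of the integral representation $(S_\calY^*\phi)(x)=\int_\calY k(x,y)\phi(y)\,d\mu(y)$ to select continuous representatives of the $e_j$ is exactly the right device, and in fact continuity of $S_\calY^*e_j$ is automatic here since the paper assumes $\bH_\calY$ consists of continuous functions.
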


The restriction $k_\calX$ of the kernel $k$ to $\calX\times\calX$ is certainly also a continuous and bounded symmetric positive definite kernel. Hence, the discussion above applies with $\calY$ replaced by $\calX$, which introduces the symbols $\bH_\calX$, $\Phi_\calX$, $S_\calX$, and $C_\calX$. Note that condition (C1) for $\calY$ implies (C1) for $\calX$. However, the analogous implication does not apply to (C2), which we explicitly assume here for $\calX$:
\begin{enumerate}
\item[(C2$_\calX$)] If $\psi\in\bH_\calX$ such that $\psi(x)=0$ for $\mu$-a.e.\ $x\in\calX$, then $\psi=0$ on $\calX$.
\end{enumerate}
Then all the statements above also hold for $\calY$ replaced by $\calX$. Again, (C2$_\calX$) holds if the measure $\mu$ is absolutely continuous with respect to Lebesgue measure with positive density. By \cite[Corollary 5.8]{pr},
$$
\bH_\calX = \{\psi|_\calX : \psi\in\bH_\calY\}
$$
and
$$
\|\psi\|_{\bH_\calX} = \inf\{\|\eta\|_{\bH_\calY} : \eta|_\calX = \psi\}.
$$

\smallskip
\noindent{\bf Covariance and cross-covariance operators.} For any $x\in\calX$ and $y\in\calY$ define the rank-one operator $\Phi(x)\otimes\Phi(y) : \bH_\calY\to\bH_\calX$ by
\[
[\Phi(x)\otimes\Phi(y)]\,\psi := \<\psi,\Phi(y)\>_{\bH_\calY}\Phi(x) = \psi(y)\Phi(x),\quad \psi\in\bH_\calY.
\]
Clearly, if $x\in\calX$, then $\Phi(x)\otimes\Phi(x) : \bH_\calX\to\bH_\calX$. The {\em covariance operator} $C_\calX = S_\calX^*S_\calX$ then has the representation
$$
C_\calX = \int_\calX \Phi(x)\otimes\Phi(x)\,d\mu(x).
$$
For $t\ge 0$ and $\psi\in\bH_\calY$ we further define the {\em cross-covariance operator} $C_{\calX\calY}^t : \bH_\calY\to\bH_\calX$ by
$$
C_{\calX\calY}^t := \int_\calX\int_\calY \Phi(x)\otimes\Phi(y)\,\rho_t(x,dy)\,d\mu(x) = S_\calX^*K^t S_\calY.
$$
As the product of the two Hilbert-Schmidt operators $S_\calX^*K^t$ and $S_\calY$, the operator $C_{\calX\calY}^t$ is trace class for all $t\ge 0$ (cf.\ \cite[p.\ 521]{k}). Note that for $t=0$ we have $C_{\calX\calY}^t\psi = S_\calX^*S_\calX\psi|_\calX = C_\calX\psi|_\calX$. Moreover, for all $\eta\in\bH_\calX$ and $\psi\in\bH_\calY$ we have
\begin{align}\label{e:umrechnen}
\<\eta,C_{\calX\calY}^t\psi\>_{\bH_\calX} = \<\eta,K^t\psi\>_{2,\calX}.
\end{align}

\subsection{Data samples and Kernel EDMD}\label{subsec:kEDMD}
Next, we briefly recall the Koopman approximation method called {\em Extended Dynamic Mode Decomposition} (EDMD), where certain (usually continuous) functions from $L^2(\calY;\mu)$ are evaluated on a set of sampled data.

\begin{ass}
The data $(x_k,y_k)$, $k=0,\ldots,m-1$, are drawn i.i.d.\ from the joint distribution
$$
d\mu_{0,t}(x,y) := \rho_t(x,dy)\,d\mu(x)
$$
on $\calX\times\calY$, where the time-step $t>0$ is fixed. This then implies that the $x_k$ are sampled i.i.d.\ with respect to $\mu$ and $y_k$ is drawn dependently of $x_k$ from the probability distribution $\rho_t(x_k,\,\cdot\,)$.
\end{ass}

\noindent{\bf The EDMD Algorithm \cite{williams15}.} In EDMD, one chooses functions in $L^2(\calY;\mu)$ from a so-called {\em dictionary} $\calD = \{\psi_1,\ldots,\psi_N\}$ spanning the finite-dimensional space $\bV = \linspan\{\psi_1,\ldots,\psi_N\}\subset L^2(\calY;\mu)$ with the goal of approximating the compression $P_\bV K^t|_\bV$ of $K^t$ to $\bV$. Here, $P_\bV$ denotes the orthogonal projection onto the space $\bV$ in $L^2(\calY;\mu)$. The matrix representation of the compression operator with respect to the basis $\calD$ of $\bV$ is then given by $C^{-1}A$, where $C = (\<\psi_i,\psi_j\>_\mu)_{i,j=1}^N$ is the so-called mass matrix and $A = (\<\psi_i,K^t\psi_j\>_\mu)_{i,j=1}^N$ is called the stiffness matrix. In order to define empirical estimators for $C$ and $A$, we replace the scalar products in the matrices' entries by sums of $\psi_i(x_k)\psi_j(x_k)$ and $\psi_i(x_k)\psi_j(y_k)$, respectively, which leads to the matrices
\[
\wh C_m = \tfrac 1m\Psi(X)\Psi(X)^\top
\qquad\text{and}\qquad
\wh A_m = \tfrac 1m\Psi(X)\Psi(Y)^\top,
\]
where $\Psi = [\psi_1,\ldots,\psi_N]^\top$ and
\[
\Psi(X) = [\Psi(x_0),\ldots,\Psi(x_{m-1})]
\qquad\text{and}\qquad
\Psi(Y) = [\Psi(y_0),\ldots,\Psi(y_{m-1})].
\]
Hence, the EDMD approximation of $P_\bV K^t|_\bV$ is given by the estimator $\wh M^{m,t} = \wh C_m^{-1}\wh A_m$. In \cite{nueske23} it was proved that $\wh M^{m,t}$ indeed converges in probability to $C^{-1}A$ as $m\to\infty$, and bounds on the accuracy were provided.

\smallskip
\noindent{\bf The kEDMD Algorithm \cite{williams15_kernel,klus20}.} In contrast to traditional EDMD, in {\em kernel EDMD} (kEDMD) the dictionary is not chosen a priori, but is given by the features sampled on the data points, that is, $\calD = \{\Phi(x_0),\ldots,\Phi(x_{m-1})\}$. The data matrices $\Psi(X)$ and $\Psi(Y)$ are then easily seen to be given by
\[
\Psi(X) = \big(k(x_i,x_j)\big)_{i,j=0}^{m-1}
\qquad\text{and}\qquad
\Psi(Y) = \big(k(x_i,y_j)\big)_{i,j=0}^{m-1}.
\]
Hence, since $\Psi(X)$ is symmetric,
\[
\wh C_m^{-1}\wh A_m = \big[\tfrac 1m\Psi(X)\Psi(X)^\top\big]^{-1}\big[\tfrac 1m\Psi(X)\Psi(Y)^\top\big] = \Psi(X)^{-1}\Psi(Y)^\top.
\]
However, since the eigenvalues of $\Psi(X)$ typically decay fast, in practice one often replaces the inverse of the ill-conditioned matrix $\Psi(X)$ by a pseudo-inverse of a rank-$r$ ($r<m$) eigendecomposition truncation of $\Psi(X)$. Hence, the kEDMD estimator for the Koopman operator compression becomes
\begin{align}\label{e:kedmd_est}
\wh M^{m,t}_r = [\Psi(X)]_r^\dagger\Psi(Y)^\top.    
\end{align}
Now, the 
empirical estimators for the operators $C_\calX$ and $C_{\calX\calY}^t$ are given by
\[
\wh C_\calX^m = \frac 1m\sum_{k=0}^{m-1}\Phi(x_k)\otimes\Phi(x_k)
\qquad\text{and}\qquad
\wh C_{\calX\calY}^{m,t} = \frac 1m\sum_{k=0}^{m-1}\Phi(x_k)\otimes\Phi(y_k).
\]
We have $\wh C_\calX^m : \bH_\calX\to\bH_\calX$ and $\wh C_{\calX\calY}^{m,t} : \bH_\calY\to\bH_\calX$. In fact, both operators map to $\bV = \linspan\calD\subset\bH_\calX$. Since
\begin{align*}
\wh C_\calX^m\Phi(x_j) &= \frac 1m\sum_{k=0}^{m-1}[\Phi(x_k)\otimes\Phi(x_k)]\Phi(x_j) = \frac 1m\sum_{k=0}^{m-1}k(x_j,x_k)\Phi(x_k),\\
\wh C_{\calX\calY}^{m,t}\Phi(x_j) &= \frac 1m\sum_{k=0}^{m-1}[\Phi(x_k)\otimes\Phi(y_k)]\Phi(x_j) = \frac 1m\sum_{k=0}^{m-1}k(x_j,y_k)\Phi(x_k),
\end{align*}
the matrix representations of $\wh C_\calX^m|_\bV$ and $\wh C_{\calX\calY}^{m,t}|_\bV$ with respect to the basis $\calD = \{\Phi(x_0),\ldots,\Phi(x_{m-1})\}$ are given by $\frac 1m\Psi(X)$ and $\frac 1m\Psi(Y)^\top$, respectively. Thus, the kEDMD estimator matrix $\wh M^{m,t}_r$ in \eqref{e:kedmd_est} represents the operator $[\wh C_\calX^m]_r^\dagger\wh C_{\calX\calY}^{m,t}|_\bV$. In the next section, we shall give probabilistic bounds on the $\|\cdot\|_{\bH_\calY\to L^2(\calX;\mu)}$-error between $P_r K^t$ and
\begin{equation}\label{e:Koopman_est}
\wh K_r^{m,t} := [\wh C_\calX^m]_r^\dagger \wh C_{\calX\calY}^{m,t},
\end{equation}
which then serves as a quantification of the prediction error made by kEDMD. Here, $P_r$ denotes the orthogonal projection in $L^2(\calX;\mu)$ onto the span of eigenvectors corresponding to the largest $r$ eigenvalues of $C_\calX$.

\medskip
\section{Bound on the Koopman prediction error}\label{sec:res1}
Before we present our main result, let us bound the error made by estimating the operators $C_\calX$ and $C_{\calX\calY}$ by their empirical estimators $\wh C_\calX^m$ and $\wh C_{\calX\calY}^{m,t}$. Since $C_\calX$ and $C_{\calX\calY}$ are Hilbert-Schmidt, we may quantify the error in the Hilbert-Schmidt norm.

\begin{prop}\label{p:prob_est}
For every $t\ge 0$, the following probabilistic bounds on the Hilbert-Schmidt estimation errors holds:
\begin{align*}
\bP\big(\|C_{\calX\calY}^t - \wh C_{\calX\calY}^{m,t}\|_{HS} > \veps\big)
&\le 2 \,e^{-\frac{m\veps^2}{8\|k\|_\infty^2}}\\
\bP\big(\|C_{\calX} - \wh C_{\calX}^{m}\|_{HS} > \veps\big)
&\le 2 \,e^{-\frac{m\veps^2}{8\|k\|_\infty^2}}.
\end{align*}
\end{prop}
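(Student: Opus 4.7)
The plan is to recognize both statements as Hoeffding-type concentration bounds for averages of i.i.d.\ bounded random variables taking values in the Hilbert space of Hilbert--Schmidt operators, and then to invoke a standard Hilbert-space-valued Hoeffding inequality (e.g.\ Pinelis' inequality for bounded martingales, or the classical Hoeffding bound for bounded $\calH$-valued random variables as stated, e.g., in Rosasco--Belkin--De Vito or Smale--Zhou).

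First I would set up the framework. Note that the space of Hilbert--Schmidt operators $HS(\bH_\calY,\bH_\calX)$ is itself a (separable) Hilbert space with inner product $\<S,T\>_{HS}=\sum_j\<Se_j,Te_j\>_{\bH_\calX}$ for any ONB $(e_j)$ of $\bH_\calY$. For $k=0,\ldots,m-1$, define the $HS$-valued random variables
\[
Z_k:=\Phi(x_k)\otimes\Phi(y_k)\quad\text{and}\quad W_k:=\Phi(x_k)\otimes\Phi(x_k).
\]
Since the samples $(x_k,y_k)$ are i.i.d.\ from $\mu_{0,t}$, so are $Z_k$; similarly, the $W_k$ are i.i.d.

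Second, I would identify the expectations with the true covariance operators. Directly from the definitions and Bochner integrability in $HS(\bH_\calY,\bH_\calX)$,
\[
\bE[Z_k]=\int_\calX\int_\calY\Phi(x)\otimes\Phi(y)\,\rho_t(x,dy)\,d\mu(x)=C_{\calX\calY}^t,
\]
and analogously $\bE[W_k]=\int_\calX\Phi(x)\otimes\Phi(x)\,d\mu(x)=C_\calX$. Thus
\[
\tfrac1m\sum_{k=0}^{m-1}(Z_k-C_{\calX\calY}^t)=\wh C_{\calX\calY}^{m,t}-C_{\calX\calY}^t,\qquad \tfrac1m\sum_{k=0}^{m-1}(W_k-C_\calX)=\wh C_\calX^m-C_\calX.
\]

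Third, I would bound the summands in Hilbert--Schmidt norm. Since for rank-one operators $\|u\otimes v\|_{HS}=\|u\|\,\|v\|$, the reproducing property yields
\[
\|Z_k\|_{HS}=\|\Phi(x_k)\|_{\bH_\calX}\|\Phi(y_k)\|_{\bH_\calY}=\sqrt{\vphi(x_k)\vphi(y_k)}\,\le\,\|\vphi\|_\infty=\|k\|_\infty,
\]
and likewise $\|W_k\|_{HS}\le\|k\|_\infty$.

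Fourth, I would apply the Hilbert-space Hoeffding inequality: if $\xi_1,\ldots,\xi_m$ are i.i.d.\ $\calH$-valued random variables with $\|\xi_k\|\le M$ a.s., then
\[
\bP\Bigl(\bigl\|\tfrac1m\textstyle\sum_k(\xi_k-\bE\xi_k)\bigr\|>\veps\Bigr)\le 2\,e^{-m\veps^2/(8M^2)},
\]
applied with $M=\|k\|_\infty$ and $\calH=HS(\bH_\calY,\bH_\calX)$ (resp.\ $HS(\bH_\calX)$). This immediately delivers both claimed bounds.

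The only delicate point — and the main (minor) obstacle — is to cite the correct Hilbert-space Hoeffding bound with the constant $8$ in the exponent, since several closely related versions circulate in the literature with slightly different constants; once this is pinned down the argument is essentially one line each.
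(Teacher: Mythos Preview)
Your proposal is correct and follows essentially the same approach as the paper: both recognize the empirical estimators as averages of i.i.d.\ rank-one operators in the Hilbert space of Hilbert--Schmidt operators, bound each summand by $\|k\|_\infty$ via $\|\Phi(x)\otimes\Phi(y)\|_{HS}=\sqrt{\vphi(x)\vphi(y)}$, and then apply Pinelis' Hoeffding-type inequality for Hilbert-space-valued random variables. The only cosmetic difference is that the paper explicitly centers the variables and bounds $\|C_{\calX\calY}^t - C_{x_ky_k}\|_{HS}\le 2\|k\|_\infty$ before invoking Pinelis with constant $2B^2$, whereas you fold this centering into the quoted form of the inequality with constant $8M^2$; the two are equivalent.
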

\begin{proof}
For $x\in\calX$ and $y\in\calY$ let us abbreviate $C_x := \Phi(x)\otimes\Phi(x)$ and $C_{xy} := \Phi(x)\otimes\Phi(y)$. If $(f_i)\subset\bH_\calY$ denotes the Mercer ONB of $\bH_\calY$ corresponding to $k$ on $\calY\times\calY$, for $x,x'\in\calX$ and $y,y'\in\calY$ we have
$$
\<C_{xy},C_{x'y'}\>_{HS} = \sum_i\<C_{xy}f_i,C_{x'y'}f_i\>_{\bH_\calX} = \sum_if_i(y)f_i(y')k(x,x') = k(x,x')k(y,y').
$$
This proves $\|C_{xy}\|_{HS}^2 = \vphi(x)\vphi(y)$. Moreover, it yields
\begin{align*}
\|C_{\calX\calY}^t\|_{HS}^2
&= \left\|\int C_{xy}\,d\mu_{0,t}(x,y)\right\|_{HS}^2 = \int\int k(x,x')k(y,y')\,d\mu_{0,t}(x,y)\,d\mu_{0,t}(x',y')\,\le\,\|k\|_\infty^2.
\end{align*}
Hence, since the $C_{x_k,y_k}$ are independent, $\bE[C_{\calX\calY}^t - C_{x_k,y_k}]=0$, and
$$
\|C_{\calX\calY}^t - C_{x_k,y_k}\|_{HS}\le \|C_{\calX\calY}^t\|_{HS} + \|C_{x_k,y_k}\|_{HS}\le 2\|k\|_\infty,
$$
we may apply Hoeffding's inequality for Hilbert space-valued random variables \cite[Theorem 3.5]{pi} (see also \cite[Theorem A.5.2]{m}) to $C_{\calX\calY}^t - C_{x_k,y_k}$ and obtain
$$
\bP\big(\|C_{\calX\calY}^t - \wh C_{\calX\calY}^{m,t}\|_{HS} > \veps\big) = \bP\left(\left\|\frac 1m\sum_{k=0}^{m-1}(C_{\calX\calY}^t - C_{x_k,y_k})\right\|_{HS} > \veps\right)\,\le\,2e^{-\frac{m\veps^2}{8\|k\|_\infty^2}}.
$$
The proof of the second claim follows analogous lines.
\end{proof}

Let $(e_j)$ be the Mercer orthonormal basis of $L^2(\calX;\mu)$ corresponding to the kernel $k$ and let $\la_j = \|S_\calX S_\calX^*e_j\|_{2,\calX}$ as well as $f_j := \sqrt{\la_j}e_j\in\bH_\calX$ (cf.\ Theorem \ref{t:mercer}). We arrange the Mercer eigenvalues $\la_j$ of $C_\calX$ in a non-increasing way, i.e.,
$$
\la_1\ge\la_2\ge\ldots
$$
and denote by $P_r$ the orthogonal projection in $L^2(\calX;\mu)$ onto $\linspan\{e_1,\ldots,e_r\}$.

\begin{thm}\label{t:main}
Fix an arbitrary $r\in\N$ and assume that the first $r+1$ eigenvalues $\la_j$ of $C_\calX$ are simple, i.e., $\la_{j+1} < \la_j$ for all $j=1,\ldots,r$, and let
\begin{equation}\label{e:delta_r}
\delta_r = \min_{j=1,\ldots,r}\frac{\la_j - \la_{j+1}}{2}
\qquad\text{and}\quad c_r = \frac 1{\sqrt{\la_r}} + \frac{r+1}{\delta_r\la_r}(1+\|\vphi\|_{1,\calY})\|\vphi\|_{1,\calX}^{1/2}.
\end{equation}
Further, let $\veps\in (0,\delta_r)$ and $\delta\in (0,1)$ be arbitrary, fix some $m\ge\max\{r,\frac{8\|k\|_\infty^2\ln(4/\delta)}{\varepsilon^2}\}$ and define $\wh K_r^{m,t}$ as in \eqref{e:Koopman_est}. Then with probability at least $1-\delta$, we have that
\begin{equation*}
\|P_rK^t - \wh K_r^{m,t}\|_{\bH_\calY\to L^2(\calX;\mu)}\,\le\,c_r\veps.
\end{equation*}
If, furthermore, $K^t\bH_\calY\subset\bH_\calX$, then, with probability at least $1-\delta$,
\begin{equation}\label{e:full_est}
\|K^t - \wh K_r^{m,t}\|_{\bH_\calY\to L^2(\calX;\mu)}\,\le\,\sqrt{\la_{r+1}}\,\|K^t\|_{\bH_\calY\to\bH_\calX} + c_r\veps.
\end{equation}
\end{thm}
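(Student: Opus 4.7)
The strategy is to compare $\wh K_r^{m,t}$ with an oracle built from the true covariances and use Proposition~\ref{p:prob_est} to control the discrepancy. Applying it twice with threshold $\veps$ and taking a union bound, the choice $m\ge 8\|k\|_\infty^2\ln(4/\delta)/\veps^2$ ensures that, simultaneously with probability at least $1-\delta$,
\begin{align*}
\|C_{\calX\calY}^t-\wh C_{\calX\calY}^{m,t}\|_{HS}\le\veps \AND \|C_\calX-\wh C_\calX^m\|_{HS}\le\veps.
\end{align*}
I then argue deterministically on this event. The first key step is the oracle identity $P_rK^t=S_\calX[C_\calX]_r^\dagger C_{\calX\calY}^t$, which follows by expanding $C_{\calX\calY}^t\psi$ in the Mercer basis $(f_j)$ of $\bH_\calX$: \eqref{e:umrechnen} together with $f_j=\sqrt{\la_j}e_j$ yields $\<C_{\calX\calY}^t\psi,f_j\>_{\bH_\calX}=\sqrt{\la_j}\<K^t\psi,e_j\>_{2,\calX}$, so that $[C_\calX]_r^\dagger$ (acting by $\la_j^{-1}$ on $f_j$ for $j\le r$) followed by $S_\calX$ (which sends $f_j\mapsto\sqrt{\la_j}[e_j]$) collapses the sum to $\sum_{j\le r}\<K^t\psi,e_j\>_{2,\calX}[e_j]=P_rK^t\psi$.

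Since the range of $\wh K_r^{m,t}$ lies in $\bH_\calX\hookrightarrow L^2(\calX;\mu)$, the error now splits as
\begin{align*}
P_rK^t-\wh K_r^{m,t}=S_\calX[C_\calX]_r^\dagger\bigl(C_{\calX\calY}^t-\wh C_{\calX\calY}^{m,t}\bigr)+S_\calX\bigl([C_\calX]_r^\dagger-[\wh C_\calX^m]_r^\dagger\bigr)\wh C_{\calX\calY}^{m,t}.
\end{align*}
For the first summand, the spectral computation above gives $\|S_\calX[C_\calX]_r^\dagger\|_{\bH_\calX\to L^2(\calX;\mu)}=\la_r^{-1/2}$, since $S_\calX[C_\calX]_r^\dagger f_j=\la_j^{-1/2}[e_j]$ for $j\le r$ and vanishes otherwise; hence this piece contributes at most $\la_r^{-1/2}\veps$, matching the term $\veps/\sqrt{\la_r}$ in $c_r\veps$.

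The principal obstacle is the perturbation bound on the truncated pseudo-inverse in the second summand. Weyl's inequality gives $|\la_j-\wh\la_j|\le\veps$, and the assumption $\veps<\delta_r$ together with $\la_r\ge 2\delta_r$ guarantees $\wh\la_j\ge\delta_r$ for all $j\le r$ and a preserved spectral gap. Splitting
\begin{align*}
[C_\calX]_r^\dagger-[\wh C_\calX^m]_r^\dagger=\sum_{j=1}^r\bigl[\la_j^{-1}(f_j\otimes f_j-\wh f_j\otimes\wh f_j)+(\la_j^{-1}-\wh\la_j^{-1})\,\wh f_j\otimes\wh f_j\bigr],
\end{align*}
the Davis--Kahan theorem bounds $\|f_j\otimes f_j-\wh f_j\otimes\wh f_j\|_{op}$ by a constant times $\veps/\delta_r$, while $|\la_j^{-1}-\wh\la_j^{-1}|=|\wh\la_j-\la_j|/(\la_j\wh\la_j)\le\veps/(\la_r\delta_r)$. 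Summing over $j=1,\ldots,r$ produces an estimate of order $r\veps/(\la_r\delta_r)$, which, combined with $\|S_\calX\|_{op}\le\|\vphi\|_{1,\calX}^{1/2}$ pulled out of the front and $\|\wh C_{\calX\calY}^{m,t}\|\le\|C_{\calX\calY}^t\|+\veps$ controlled in terms of $1+\|\vphi\|_{1,\calY}$ via the Mercer structure on $\calY$, yields the remaining $\tfrac{r+1}{\delta_r\la_r}(1+\|\vphi\|_{1,\calY})\|\vphi\|_{1,\calX}^{1/2}\veps$ contribution in $c_r\veps$.

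For the refined estimate \eqref{e:full_est} under $K^t\bH_\calY\subset\bH_\calX$, I decompose $K^t-\wh K_r^{m,t}=(I-P_r)K^t+(P_rK^t-\wh K_r^{m,t})$ and invoke the bound just proved for the second summand. For the first summand, $S_\calX f_j=\sqrt{\la_j}[e_j]$ gives
\begin{align*}
\|(I-P_r)K^t\psi\|_{2,\calX}^2=\sum_{j>r}\la_j|\<K^t\psi,f_j\>_{\bH_\calX}|^2\le\la_{r+1}\|K^t\psi\|_{\bH_\calX}^2,
\end{align*}
so taking the supremum over $\|\psi\|_{\bH_\calY}\le 1$ produces $\sqrt{\la_{r+1}}\,\|K^t\|_{\bH_\calY\to\bH_\calX}$ and finishes the proof.
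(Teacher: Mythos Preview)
Your proof is correct and follows essentially the same route as the paper's. The paper writes $P_rK^t\psi=\sum_{j\le r}\<C_{\calX\calY}^t\psi,e_j\>_{\bH_\calX}e_j$ and $\wh K_r^{m,t}\psi=\sum_{j\le r}\<\wh C_{\calX\calY}^{m,t}\psi,\wh e_j\>_{\bH_\calX}\wh e_j$, introduces the intermediate $\wt K_r^{m,t}\psi=\sum_{j\le r}\<\wh C_{\calX\calY}^{m,t}\psi,e_j\>_{\bH_\calX}e_j$, and then bounds the two differences exactly as you do: the first via $\la_r^{-1/2}\|C_{\calX\calY}^t-\wh C_{\calX\calY}^{m,t}\|$, the second by splitting into $\sum_j\la_j^{-1}(P_j-\wh P_j)$ plus $\sum_j(\la_j^{-1}-\wh\la_j^{-1})\wh P_j$ and invoking Weyl and Davis--Kahan type perturbation bounds, together with $\|\wh C_{\calX\calY}^{m,t}\|\le\veps+\|\vphi\|_{1,\calY}$; the projection error under $K^t\bH_\calY\subset\bH_\calX$ is handled identically. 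Your presentation via the operator identity $P_rK^t=S_\calX[C_\calX]_r^\dagger C_{\calX\calY}^t$ and the pseudo-inverse difference is just a more compact packaging of the same decomposition.
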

\begin{proof}[Proof of Theorem \rmref{t:main}]
Let $\wh\la_1\ge\ldots\ge\wh\la_r$ denote the largest $r$ eigenvalues of $\wh C_\calX^m$ in descending order and let $\wh e_1,\ldots,\wh e_r$ be corresponding eigenfunctions, respectively, such that $\|\wh e_j\|_{\bH_\calX} = \wh\la_j^{-1/2}$ for $j=1,\ldots,m$. If we set $\wh f_j = \wh\la_j^{1/2}\cdot\wh e_j$, then for $\psi\in\bH_\calY$ we have
\[
\wh K_r^{m,t}\psi = [\wh C_\calX^m]_r^\dagger\wh C_{\calX\calY}^{m,t}\psi = \sum_{j=1}^r\wh\la_j^{-1}\<\wh C_{\calX\calY}^{m,t}\psi,\wh f_j\>_{\bH_\calX}\wh f_j = \sum_{j=1}^r\<\wh C_{\calX\calY}^{m,t}\psi,\wh e_j\>_{\bH_\calX}\wh e_j,
\]
which estimates (see \eqref{e:umrechnen})
\[
\sum_{j=1}^r\<C_{\calX\calY}^t\psi,e_j\>_{\bH_\calX}e_j = \sum_{j=1}^r\<K^t\psi,e_j\>_{2,\calX}e_j = P_rK^t\psi.
\]
Hence, we have to estimate the deviation of $\sum_{j=1}^r\<\wh C_{\calX\calY}^{m,t}\psi,\wh e_j\>_{\bH_\calX}\wh e_j$ from $\sum_{j=1}^r\<C_{\calX\calY}^t\psi,e_j\>_{\bH_\calX}e_j$, which was done in \cite{PhilScha23} for the special case of $\calX = \calY = \R^d$. The proof for the present case follows analogous lines. Also the estimate in the case $K^t\bH_\calY\subset\bH_\calX$ can be proved by following the lines of the proof of the counterpart in \cite{PhilScha23}.
\end{proof}

\begin{rem}
The assumption $K^t\bH_\calY\subset\bH_\calX$ is not too exotic. In fact, we prove in Section \ref{a:OU_invariance} in the Appendix that the condition holds for the Ornstein-Uhlenbeck process, if $\calY = \R$, $\calX\subset\R$ is any compact set with non-empty interior, and $k$ is a Gaussian RBF kernel.
\end{rem}

\section{Extension to deterministic control-affine systems}\label{s:control}
\noindent In this part, we extend the error estimates to deterministic control systems of the form
\begin{align}\label{eq:controlsystem}
\dot{x}(t) = f(x(t)) + \sum_{i=1}^{\nc} u_i(t) g_i(x(t)),\qquad t\in [0,T],
\end{align}
with vector fields $f:\R^d \to \R^d$ and $g_i:\R^d \to \R^d$, $i\in [1:\nc] := \{1,\ldots,\nc\}$, $\nc\in\N$, and a prediction horizon $T>0$. We let $\calX$ and $\calY$ be compact sets in $\R^d$ and choose specifically
$$
\mathrm{d}\mu(x) = \frac 1{|\calX|}\,\mathrm{d}x,
$$
where $|\calX|$ denotes the Lebesgue measure of $\calX$ on $\R^d$. We furthermore assume that the restrictions of the vector fields $f$ and $g_1,\ldots,g_\nc$ to $\calY$ are $C^1$. Admissible controls $u$ in \eqref{eq:controlsystem} are measurable functions $u : [0,T]\to\calU$, where $\calU\subset\R^\nc$ is a compact, convex set satisfying $0\in\operatorname{int}(\calU)$. To avoid excessive notation, we assume that  for all $x^0\in\calX$ and all admissible controls $u : [0,T]\to\calU$,
\begin{itemize}
\item $[0,T]$ is contained in the existence interval for the solution $x(\,\cdot\,;x^0,u)$ to \eqref{eq:controlsystem} and
\item $x(t;x^0,u)\in\calY$ for each $t\in [0,T]$.
\end{itemize}
If the first assumption is satisfied and $\calX$ is contained in the interior of $\calY$, then by continuity of the flow in time and the compactness of both $\calX$ and $\calU$, there exists a time horizon $T_0$ such that the second assumption is satisfied for all $T\in (0,T_0)$.

Here, we consider sampled-data systems with Zero-order Hold (ZoH), i.e., for a given, typically small sampling period~$t>0$, $u(s) \equiv u$ on $[kt,(k+1)t)$, $k \in \mathbb{N}_0$. This system class is well established in systems and control, see, e.g., \cite{CastGenn97,NesiTeel07} and the references therein. 
Sampled-data systems 
can be used, e.g., for output-feedback stabilization of nonlinear systems~\cite{LiZhao18} or in the redesign of continuous-time control signals~\cite{GrunWort08}. In particular, there are many results showing that system-theoretic properties like stabilizability, or even cost controllability, are preserved under sampling~\cite{WortRebl14}. The latter even 
is inherited by the EDMD-based surrogate model in the Koopman framework as recently shown in~\cite{BoldGrun23} analyzing EDMD-based Model Predictive Control.
Hence, we also restrict our consideration to sampled-data systems knowing that the analysis on one sampling period suffices to obtain similar results based on the concatenation of the control signal for multiple sampling periods.

For a constant control $u\in\calU$ we let $K_u^t$ be the Koopman operator for the autonomous system \eqref{eq:controlsystem}, that is,
\begin{align*}
(K_u^t \psi)(x^0) := \psi\big(x(t;x^0,u)\big),\qquad x^0\in\calX,\,\psi\in L^2(\calY).
\end{align*}
In order to control the system by piecewise constant controls, in principle, one could compute the kEDMD estimator for $K_u^t$ for every applied control $u$, which, however, would require a lot of computation time in each step. As this is infeasible in practice, we circumvent this issue by approximating $K_u^t$ by a linear combination of $\nc+1$ Koopman operators $K_{u_i}^t$ with quadratic precision. In this course, we leverage the affine representation of the Koopman generator.

The Koopman generator $\calL^u$ associated with \eqref{eq:controlsystem} is given by
\begin{align}\label{eq:gen_to_op}
\calL^u \psi = \lim_{t\to 0} \frac{K_u^t \psi - \psi|_\calX}{t}
\end{align}
for $\psi\in\dom\calL^u$, where the limit is taken in $L^2(\calX)$, and $\dom\calL^u$ consists of those functions $\psi\in L^2(\calY)$ for which this $L^2$-limit exists. It is easy to see that $C^1(\calY)\subset\dom\calL^u$ for each $u\in\calU$ such that for $\psi\in C^1(\calY)$ and $x^0\in\calX$,
\begin{align}
(\calL^u \psi)(x^0)
&= \notag\lim_{t\to 0}\tfrac 1t\big((K_u^t\psi)(x^0) - \psi(x^0)\big) = \frac{\mathrm{d}}{\mathrm{d}t}(K_u^t\psi)(x^0)\Big|_{t=0} = \frac{\mathrm{d}}{\mathrm{d}t}\psi(x(t;x^0,u))\Big\vert_{t=0}\\
&= \nabla \psi(x^0)^\top\Big(f(x^0) + \sum_{i=1}^{\nc} u_i g_i(x^0)\Big)\label{e:with_nabla}.
\end{align}
Let $\{e_i\}_{i=1}^{\nc}$ denote the standard basis of $\R^\nc$ and let 
\[
\gamma_i := \inf\{\gamma > 0 : \gamma e_i\notin\calU\},\qquad i = 1,\ldots,\nc.
\]
Then the $\gamma_i$ are positive numbers thanks to $0\in\operatorname{int}(\calU)$, and we have $\gamma_ie_i\in\partial\calU\subset\calU$. Now, if $u = \sum_{i=1}^\nc u_ie_i$ is the coordinate representation of the vector $u\in\calU$, the representation \eqref{e:with_nabla} reveals that, on $C^1(\calY)$,
\begin{align}\label{eq:generatoraffine}
\calL^u = \calL^0 + \sum_{i=1}^{\nc} \frac{u_i}{\gamma_i}\left(\calL^{\gamma_i e_i}-\calL^0\right).
\end{align}
To transfer the error bounds from autonomous systems derived in this work to control systems, we will show that the restriction of the Koopman operator to $C^2(\calY)$ is also affine in the control---at least approximately for small $t>0$. 
For this, set
$$
f_u(x) := f(x)+ \sum_{i=1}^{\nc}u_i g_i(x)
$$
and let $\psi \in C^2 (\calY)$, $u\in \calU$, and $x^0\in\calX$. We apply Taylor's theorem to $\psi\circ x(\,\cdot\,;x^0,u)$ at $t_0=0$ and observe that
\begin{align*}
(K_{u}^t\psi - \psi|_\calX)(x^0) &= \psi(x(t;x^0,u)) - \psi(x(0;x^0,u)) \\&= t\nabla\psi(x^0)\cdot f_u(x^0) + \frac{1}{2}r(\xi)t^2,
\end{align*}
for some $\xi\in [0,t]$, where, abbreviating $x(s) = x(s;x^0,u)$,
\begin{align*}
r(s) = f_u(x(s))^\top\nabla^2 \psi(x(s))f_u(x(s)) + \nabla \psi(x(s))^\top Df_u(x(s))f_u(x(s)).
\end{align*}
By compactness of $\calU$, $\calX$, and $\calY$ and the continuity of all functions involved, we can bound the remainder term uniformly in $u\in\calU$, $x^0\in\calX$ and $s\in [0,T]$. More specifically,
\begin{align}\label{e:est_remainder}
\max_{s\in [0,T]}|r(s)| \leq c(f,g_1,\ldots,g_{n_u})\cdot\|\psi\|_{C^2(\calY)},
\end{align}
where $\|\psi\|_{C^2(\calY)}:= \|\psi\|_{C(\calY)} + \|\nabla\psi\|_{C(\calY)^d} + \|\nabla^2\psi\|_{C(\calY)^{d\times d}}$. Thus, 
\begin{align}\label{eq:approxkoop}
K_{u}^t\psi =  \psi + t\calL^{u}\psi + \mathcal{O}(t^2)
\end{align}
where the constant in $\calO(t^2)$ as defined in \eqref{e:est_remainder} can be chosen independently of $u$.

Invoking control-affinity of the generator~\eqref{eq:generatoraffine}, we thus get
\begin{align}\label{eq:operator_controlaffine}
K_u^t\psi = K^t \psi + \sum_{i=1}^{\nc} \frac{u_i}{\gamma_i}\left(K^t_{\gamma_i e_i}-K^t\right)\psi + O(t^2),
\end{align}
which means that the Koopman operator is control affine up to second order in time. This is Equation~(3.14) in \cite{PeitOtto2020}, where we stress however that the remainder term here depends on the chosen dictionary function.

The error bound~\eqref{e:full_est} obtained in Theorem~\ref{t:main} can now be transferred to the controlled case. Thus, for $u\in\calU$ we define the estimator
\begin{align*}
\wh K^{m,t}_{r,u} := \wh K^{m,t}_{r} + \sum_{i=1}^{\nc} \frac{u_i}{\gamma_i}\big(\wh{K}^{m,t}_{r,\gamma_i e_i} - \wh{K}^{m,t}_{r}\big),
\end{align*}
where the $\wh K_{r,\gamma_ie_i}^{m,t}$ are defined as in \eqref{e:Koopman_est} for the system $\dot x = f(x) + \gamma_ig_i(x)$.

\begin{thm}\label{t:control}
Let the assumptions in Theorem \ref{t:main} be satisfied. For fixed $r\in\N$, let $\delta_r$ and $c_r$ be as in~\eqref{e:delta_r}. Further, let $\veps\in (0,\delta_r)$ and $\delta\in (0,1)$ be arbitrary and fix some $m\ge\max\{r,\frac{8\|k\|_\infty^2\ln(4(n_u+1)/\delta)}{\varepsilon^2}\}$. Then, for each $\psi\in\bH_\calY\cap C^2(\calX)$, there is a constant $c_\psi\geq 0$ such that with probability at least $1-\delta$ for all $u\in\calU$ we have
\begin{align}\label{e:control_est}
\|P_rK_u^t\psi - \wh K_{r,u}^{m,t}\psi\|_{L^2(\calX;\mu)}\,\le\,(1+2\|\Gamma^{-1}u\|_1)c_r\veps\|\psi\|_{\bH_\calY} + c_\psi t^2.
\end{align}
where $\Gamma = \diag_{i=1,\ldots,n_u}\gamma_i$. In the case where $K_u^t\bH_\calY\subset\bH_\calX$ for all $u\in\calU_\partial := \{0,\gamma_1e_1,\ldots,\gamma_{n_u}e_{n_u}\}$, we obtain
\begin{align}\label{e:control_est2}
\|K_u^t\psi - \wh K_{r,u}^{m,t}\psi\|_{L^2(\calX;\mu)}\,\le\,(1+2\|\Gamma^{-1}u\|_1)(\sqrt{\la_{r+1}}M + c_r\veps)\|\psi\|_{\bH_\calY} + c_\psi t^2,
\end{align}
where $M = \max\{\|K_u^t\|_{\bH_\calY\to\bH_\calX} : u\in\calU_\partial\}$.

If the kernel $k$ enjoys $C^2$-regularity, for $m\ge\max\{r,\frac{8\|k\|_\infty^2\ln(4/\delta)}{\varepsilon^2}\}$ data points we have with probability at least $1-\delta$ that
\[
\|P_rK_u^t - \wh K_{r,u}^{m,t}\|_{\bH_\calY\to L^2(\calX;\mu)}\,\le\,c_r\veps + t^2\cdot C_k\wh\la_r^{-1}
\]
with a constant $C_k$ depending only on the kernel $k$. Here, $\wh\la_r$ denotes the $r$-th largest eigenvalue of the operator $\wh C_\calX^m$. Again, if $K_u^t\bH_\calY\subset\bH_\calX$ for all $u\in\calU_\partial$, then
\[
\|K_u^t - \wh K_{r,u}^{m,t}\|_{\bH_\calY\to L^2(\calX;\mu)}\,\le\,\sqrt{\la_{r+1}}M + c_r\veps + t^2\cdot C_k\wh\la_r^{-1}.
\]
\end{thm}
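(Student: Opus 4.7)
My plan is to leverage two ingredients: the approximate control-affinity of $K_u^t$ on $C^2(\calY)$ given by~\eqref{eq:operator_controlaffine}, and Theorem~\ref{t:main} applied to each of the $n_u+1$ autonomous systems corresponding to $u \in \{0, \gamma_1 e_1, \ldots, \gamma_{n_u} e_{n_u}\}$. For fixed $\psi \in \bH_\calY \cap C^2(\calY)$ and $u \in \calU$, I would start from the exact algebraic decomposition
\[
P_r K_u^t\psi - \wh K_{r,u}^{m,t}\psi = A_0\psi + \sum_{i=1}^{n_u}\frac{u_i}{\gamma_i}(A_i - A_0)\psi + P_r R_u(t)\psi,
\]
where $A_0 := P_r K^t - \wh K_r^{m,t}$, $A_i := P_r K_{\gamma_i e_i}^t - \wh K_{r,\gamma_i e_i}^{m,t}$, and $R_u(t)\psi := K_u^t\psi - K^t\psi - \sum_i(u_i/\gamma_i)(K_{\gamma_i e_i}^t - K^t)\psi$ is the residual of the approximate control-affinity.

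Next I would control $R_u(t)\psi$ by writing the expansion~\eqref{eq:approxkoop} simultaneously for $K_u^t\psi$, $K^t\psi$ and $K_{\gamma_i e_i}^t\psi$. The $t$-linear contributions cancel exactly thanks to the generator identity~\eqref{eq:generatoraffine}, leaving $R_u(t)\psi$ as an affine combination of the second-order Taylor remainders, each controlled by~\eqref{e:est_remainder}. Since $\mu(\calX)=1$, this yields $\|R_u(t)\psi\|_{L^2(\calX;\mu)} \le c_\psi t^2$ with $c_\psi$ proportional to $\|\psi\|_{C^2(\calY)}$ and made uniform in $u \in \calU$ by compactness of $\calU$.

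For the operators $A_0,A_1,\ldots,A_{n_u}$, I would invoke Theorem~\ref{t:main} individually for each of the $n_u+1$ autonomous systems. Independence of their i.i.d.\ data sets together with a union bound produces a single good event of probability at least $1-\delta$ under the inflated requirement $m \ge 8\|k\|_\infty^2 \ln(4(n_u+1)/\delta)/\veps^2$. On this event each $\|A_i\psi\|_{L^2(\calX;\mu)}$ is dominated by $c_r\veps\|\psi\|_{\bH_\calY}$, and applying the triangle inequality to the decomposition immediately yields~\eqref{e:control_est}; the invariant case~\eqref{e:control_est2} is identical with each autonomous bound replaced by the stronger $\sqrt{\la_{r+1}}M + c_r\veps$ from Theorem~\ref{t:main}.

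For the operator-norm bounds under $C^2$-regularity of $k$, I would use that differentiating the reproducing property twice gives $\partial^\alpha \psi(x) = \<\psi,\partial_2^\alpha k(x,\cdot)\>_{\bH_\calY}$ for $|\alpha|\le 2$, hence a continuous embedding $\bH_\calY \hookrightarrow C^2(\calY)$ whose norm $C_k$ depends only on the derivatives of $k$ up to order two. This upgrades the pointwise bound on $c_\psi$ to $C_k\|\psi\|_{\bH_\calY}$. The factor $\wh\la_r^{-1}$ in the stated remainder would then be obtained by measuring $R_u(t)\psi$ inside $\bH_\calX$ via the $C^2$-reproducing kernel estimate and pushing it through the pseudo-inverse representation $\wh K_{r,u}^{m,t} = [\wh C_\calX^m]_r^\dagger\, \wh C_{\calX\calY,u}^{m,t}$, where $\|[\wh C_\calX^m]_r^\dagger\| = \wh\la_r^{-1}$. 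I expect the most delicate point of the proof to lie precisely here: obtaining a $u$-uniform operator-norm bound of the exact form $c_r\veps + t^2 C_k \wh\la_r^{-1}$ (in particular, shedding the $\|\Gamma^{-1}u\|_1$ prefactor that naturally appears in the pointwise estimate) requires a careful repackaging of the residual through the pseudo-inverse rather than the crude triangle inequality used for the pointwise case, and matching the exact combinatorial constants in the theorem's statement.
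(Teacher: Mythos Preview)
Your argument for the first two estimates \eqref{e:control_est} and \eqref{e:control_est2} is essentially the paper's own proof: decompose via approximate control-affinity, control the second-order Taylor remainder by \eqref{e:est_remainder}, and union-bound the $n_u+1$ applications of Theorem~\ref{t:main}. That part is fine.

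For the operator-norm estimates under $C^2$ regularity of $k$, however, your proposed route has a genuine gap. In your decomposition the residual is $P_r R_u(t)\psi$, which involves only the $L^2$-projection $P_r$; there is no pseudo-inverse to ``push it through,'' so the factor $\wh\la_r^{-1}$ cannot arise this way. More importantly, you still carry the $n_u+1$ terms $A_0,A_1,\dots,A_{n_u}$, which forces both the $(1+2\|\Gamma^{-1}u\|_1)$ prefactor and the inflated data requirement $\ln(4(n_u+1)/\delta)$---in direct contradiction with the stated bound, which has neither.

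The paper's key idea here is different from what you sketch: the Taylor expansion is applied not to the observable $\psi$ but to the $\bH_\calY$-valued feature map $x\mapsto\Phi(x)=k(x,\cdot)$, which is $C^2$ because $k$ is. Writing $\wh K^{m,t}_{r,u}$ explicitly, the affine combination of $\Phi(F_0^t(x_k))$ and $\Phi(F_{\gamma_ie_i}^t(x_k))$ inside the estimator equals $\Phi(F_u^t(x_k)) + t^2\Psi_{u,t}(x_k)$ with $\Psi_{u,t}$ uniformly bounded in $L^\infty(\calX,\bH_\calY)$. Hence
\[
\wh K^{m,t}_{r,u} = [\wh C_\calX^m]_r^\dagger\,\wh C^{m,t}_{\calX\calY,u} \;+\; \frac{t^2}{m}\,[\wh C_\calX^m]_r^\dagger\sum_{k}\Phi(x_k)\otimes\Psi_{u,t}(x_k),
\]
where $\wh C^{m,t}_{\calX\calY,u} = \tfrac1m\sum_k\Phi(x_k)\otimes\Phi(F_u^t(x_k))$ is the empirical cross-covariance for the $u$-system itself. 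The first term is precisely the kEDMD estimator $\wt K^{m,t}_{r,u}$ for $K_u^t$ built from the (deterministic) successors $F_u^t(x_k)$, so a \emph{single} application of Theorem~\ref{t:main} gives $\|P_rK_u^t - \wt K^{m,t}_{r,u}\|\le c_r\veps$; this is why only $\ln(4/\delta)$ data is needed and why the $\|\Gamma^{-1}u\|_1$ prefactor disappears. The second term is then bounded via $\|[\wh C_\calX^m]_r^\dagger\Phi(x_k)\|_{\bH_\calX}\le\wh\la_r^{-1}\sqrt{\vphi(x_k)}$, producing the $t^2 C_k\wh\la_r^{-1}$ contribution.
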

\begin{proof}
Using the approximate control-affinity \eqref{eq:operator_controlaffine}, we compute
\begin{align*}
(P_rK_u^t - \wh K_{r,u}^{m,t})\psi = (P_rK^t - \wh K_r^{m,t})\psi + \sum_{i=1}^\nc \frac{u_i}{\gamma_i}\left(P_rK^t_{\gamma_ie_i} - \wh K^{m,t}_{r,\gamma_ie_i} - P_rK^t + \wh K^{m,t}_{r}\right)\psi + t^2\phi,
\end{align*}
where $\|\phi\|_{L^\infty(\calX)}\le c_\psi$ for all choices of $u\in\calU$ and $t\in [0,T]$. By the choice of $m$, the respective bounds in Theorem~\ref{t:main} on $\|P_rK^t - \wh K_r^{m,t}\|$ and $\|P_rK^t_{\gamma_ie_i} - \wh K^{m,t}_{r,\gamma_ie_i}\|$, $i\in\{1,\ldots,\nc\}$, hold with probability at least $1-\delta(n_u+1)^{-1}$ each. Hence, the probability for all $n_u+1$ bounds to hold exceeds $1-\delta$, cf.\ \cite[Lemma 22]{nueske23}. In that event,
\begin{align*}
&\Big\|(P_rK^t - \wh K_r^{m,t})\psi + \sum_{i=1}^\nc \frac{u_i}{\gamma_i}\left(P_rK^t_{\gamma_ie_i} - \wh K^{m,t}_{r,\gamma_ie_i} - P_rK^t + \wh K^{m,t}_{r}\right)\psi\Big\|_{L^2(\calX)}\\
&\le \|P_rK^t - \wh K_r^{m,t}\|_{\bH_\calY\to L^2(\calX)} \|\psi\|_{\bH_\calY}\\
&\qquad + \Big(\max_{i\in\{1,\ldots,\nc\}}\big\|P_rK^t_{\gamma_ie_i} - \wh K^{m,t}_{r,\gamma_ie_i}\big\|_{\bH_\calY\to L^2(\calX)} + \|P_rK^t - \wh K_r^{m,t}\|_{\bH_\calY\to L^2(\calX)}\Big) \|\psi\|_{\bH_\calY}  \sum_{i=1}^\nc \frac{|u_i|}{\gamma_i}\\
&\le \big(1 + 2\|\Gamma^{-1}u\|_1\big)c_r\veps\|\psi\|_{\bH_\calY},
\end{align*}
which proves the given estimate. The proof for the case $K_u^t\bH_\calY\subset\bH_\calX$ for all $u\in\calU_\partial$ follows analogous lines, making use of \eqref{e:full_est}.

Now, let $k\in C^2$ and denote the flow of $\dot x = f_v(x)$ by $F_v$, $v\in\calU$. Also, let $F := F_0$. In addition to the random variables $y_k = F^t(x_k)$, $k=0,\ldots,m-1$, let $y_{k,v} = F_v^t(x_k)$, $i=1,\ldots,\nc$. Moreover, let $\wh C^{m,t}_{\calX\calY,v}$ be the empirical estimator of the cross-covariance operator $C^t_{\calX\calY,v}$ for the system $\dot x = f_v(x)$. Then we have
\begin{align*}
\wh K^{m,t}_{r,u}
&= \wh K^{m,t}_{r} + \sum_{i=1}^{\nc} \frac{u_i}{\gamma_i}\big(\wh{K}^{m,t}_{r,\gamma_ie_i} - \wh{K}^{m,t}_{r}\big) = [\wh C_\calX^m]_r^\dagger\Big[ \wh C^{m,t}_{\calX\calY} + \sum_{i=1}^{\nc} \frac{u_i}{\gamma_i}\big(\wh C^{m,t}_{\calX\calY,\gamma_ie_i} - \wh C^{m,t}_{\calX\calY}\big) \Big]\\
&= [\wh C_\calX^m]_r^\dagger\,\frac 1m\sum_{k=0}^{m-1}\Phi(x_k)\otimes\Bigg[ \Phi(F^t(x_k)) + \sum_{i=1}^{\nc} \frac{u_i}{\gamma_i}\big(\Phi(F_{\gamma_ie_i}^t(x_k)) - \Phi(F^t(x_k))\big) \Bigg]\\
&= [\wh C_\calX^m]_r^\dagger\,\frac 1m\sum_{k=0}^{m-1}\Phi(x_k)\otimes\Bigg[ \bK^t\Phi(x_k) + \sum_{i=1}^{\nc} \frac{u_i}{\gamma_i}\big(\bK_{\gamma_ie_i}^t\Phi(x_k) - \bK^t\Phi(x_k)\big) \Bigg],
\end{align*}
where $\bK_v^t : L^2(\calY,\bH_\calY)\to L^2(\calX,\bH_\calY)$ for $v\in\calU$ is defined by $\bK_v^t\Psi(x) = \Psi(F_v^t(x))$, $\Psi\in L^2(\calY,\bH_\calY)$, and $\bK^t := \bK^t_0$. As $\Phi\in C^2$, we can, analogously as in the discussion preceding the theorem, see that
\[
\bK^t\Phi + \sum_{i=1}^{\nc}\frac{u_i}{\gamma_i}\left(\bK^t_{\gamma_ie_i}-\bK^t\right)\Phi = \bK_u^t\Phi + t^2\Psi_{u,t}
\]
with some $\Psi_{u,t}\in L^2(\calX,\bH_\calY)$ such that $C_\Phi = \sup\{\|\Psi_{u,t}\|_{L^\infty(\calX,\bH_\calY)} : u\in\calU,\,t\in [0,T]\} < \infty$. Hence, we obtain
\begin{align*}
\wh K^{m,t}_{r,u}
&= [\wh C_\calX^m]_r^\dagger\,\frac 1m\sum_{k=0}^{m-1}\Phi(x_k)\otimes\big[\Phi(F_u^t(x_k)) + t^2\Psi_{u,t}(x_k)\big]\\
&= [\wh C_\calX^m]_r^\dagger\Big[\wh C^{m,t}_{\calX\calY,u} + \frac{t^2}m\sum_{k=0}^{m-1}\Phi(x_k)\otimes\Psi_{u,t}(x_k)\Big].
\end{align*}
Note that $[\wh C_\calX^m]_r^\dagger\wh C^{m,t}_{\calX\calY,u}$ is the estimator for $K^t_u$ as defined in \eqref{e:Koopman_est}. Here, we shall denote it by $\wt K^{m,t}_{r,u}$. Now, if $\psi\in\bH_\calY$, then
\begin{align*}
\|P_rK^t\psi - \wh K^{m,t}_{r,u}\psi\|_{L^2(\calX)}
&\le \|P_rK^t\psi - \wt K^{m,t}_{r,u}\psi\|_{L^2(\calX)} + \|\wt K^{m,t}_{r,u}\psi - \wh K^{m,t}_{r,u}\psi\|_{L^2(\calX)}\\
&\le c_r\veps\|\psi\|_{\bH_\calY} + \frac{t^2}m\Bigg\|\sum_{k=0}^{m-1}[\wh C_\calX^m]_r^\dagger[\Phi(x_k)\otimes\Psi_{u,t}(x_k)]\psi\Bigg\|_{L^2(\calX)}\\
&\le c_r\veps\|\psi\|_{\bH_\calY} + \frac{t^2}m\sum_{k=0}^{m-1}|\<\psi,\Psi_{u,t}(x_k)\>_{\bH_\calY}|\cdot\|[\wh C_\calX^m]_r^\dagger\Phi(x_k)\|_{L^2(\calX)}\\
&\le c_r\veps\|\psi\|_{\bH_\calY} + \|\vphi\|_{1,\calX}^{1/2}\cdot \frac{t^2}m\sum_{k=0}^{m-1}|\<\psi,\Psi_{u,t}(x_k)\>_{\bH_\calY}|\|[\wh C_\calX^m]_r^\dagger\Phi(x_k)\|_{\bH_\calX}\\
&\le c_r\veps\|\psi\|_{\bH_\calY} + \|k\|_{\infty}^{1/2}\cdot \frac{t^2}m\sum_{k=0}^{m-1}\|\psi\|_{\bH_\calY}\|\Psi_{u,t}(x_k)\|_{\bH_\calY}\wh\la_r^{-1}\sqrt{\vphi(x_k)}\\
&\le c_r\veps\|\psi\|_{\bH_\calY} + \wh\la_r^{-1}C_\Phi\|k\|_\infty\cdot t^2\|\psi\|_{\bH_\calY},
\end{align*}
which proves the claim with $C_k = C_\Phi\|k\|_\infty$.
\end{proof}

\section{Numerical results}\label{s:num}
In this part, we present two numerical examples. First we consider an autonomous stochastic system to illustrate the results of Section~\ref{sec:res1}. Second, we present numerical results for a controlled deterministic system to showcase the bounds deduced in Section~\ref{s:control}.
\subsection{Estimation Error for Ornstein-Uhlenbeck Process}
We consider a one-dimensional Ornstein-Uhlenbeck (OU) process obeying the stochastic differential equation
\begin{equation}
\label{eq:ou_process}
    d X_t  = -\alpha X_t  + \sqrt{2\beta^{-1}} dW_t,
\end{equation}
with $\alpha = 1$ and $\beta = 2$ in the numerical tests. The initial domain is chosen as $\calX = [-1.5, 1.5]$, the final domain is simply the entire real line $\calY = \R$. The measure $\mu$ is chosen as the invariant distribution of the OU process on $\calY$, which is normal with mean zero and standard deviation $(\alpha\beta)^{-1/2}$. We then use the restriction of $\mu$ to the finite interval $[-1.5, 1.5]$ to generate initial conditions for the SDE~\eqref{eq:ou_process}.

Mirroring the analysis in~\cite{PhilScha23}, we consider the approximation of the cross-covariance operator $C^t_{\calX\calY}$ at lag time $t = 0.05$, corresponding to five elementary time steps at integration time step $\Delta_t = 0.01$. We use the Gaussian radial basis function kernel
\begin{equation*}
k(x, y) = \exp\Big[-\frac{(x - y)^2}{\sigma^2}\Big],
\end{equation*}
for bandwidths $\sigma \in \{0.05, 0.1, 0.5\}$. Setting the confidence level to $1 - \delta = 0.9$, we use Proposition~\ref{p:prob_est} to bound the minimal guaranteed error that is achieved with this confidence, for a range of data sizes $m$ between $m = 20$ and $m = 5 \cdot 10^4$. As a comparison, we estimate the actual incurred error for fifty independent collections of $m$ pairs $(x_k, y_k)$ each, and plot the $(1 - \delta$)-percentile for this error as a function of $m$. To do so, we first observe that the Hilbert-Schmidt norm of the empirical covariance operator $\wh{C}^{t, m}_{\calX\calY}$ can be calculated using the Mercer features and eigenvalues of the Gaussian kernel:
\begin{equation}
\label{e:hs_norm_ct}
\|\wh{C}^{t,m}_{\calX\calY} \|^2_{\mathrm{HS}} = \sum_{i, j=1}^\infty \lambda_i \lambda_j (M^{m}_{ij})^2,
\end{equation}
using the following matrix elements for the Mercer features:
\begin{equation}
\label{e:matrix_element_mercer}
    M^{m}_{ij} := \frac{1}{m}\sum_{l=1}^{m} e_i(x_l)e_j(y_l).
\end{equation}
In practice, we truncate the infinite sum in~\eqref{e:hs_norm_ct} after $i_{\max} = j_{\max} = 30$ terms. We then first generate ten large reference collections $(x_l, y_l), 1\leq l \leq m_{\max}$, where $m_{\max} = 10^7$, and average the matrix elements $M^{m_{\max}}_{ij}$ over these large reference samples. The actual error for $m < m_{\max}$ data pairs $(x_k, y_k), 1\leq k \leq m$ is then estimated by the formula
\begin{equation*}
\|C^t_{\calX\calY} - \wh{C}^{t,m}_{\calX\calY}\|^2_{\mathrm{HS}} \approx \|\wh{C}^{t,m_{\max}}_{\calX\calY} - \wh{C}^{t,m}_{\calX\calY}\|^2_{\mathrm{HS}} = \|\wh{C}^{t,m_{\max}}_{\calX\calY}\|^2_{\mathrm{HS}} - 2 \innerprod{\wh{C}^{t,m_{\max}}_{\calX\calY}}{\wh{C}^{t, m}_{\calX\calY}}_{\mathrm{HS}} + \|\wh{C}^{t,m}_{\calX\calY} \|^2_{\mathrm{HS}}.
\end{equation*}
The Hilbert-Schmidt inner product can be computed by the formula
\begin{equation*}
    \innerprod{\wh{C}^{t,m_{\max}}_{\calX\calY}}{\wh{C}^{t, m}_{\calX\calY}}_{\mathrm{HS}} = \sum_{i, j=1}^\infty \lambda_i \lambda_j M^{\max}_{ij} M^m_{ij}.
\end{equation*}
The results are shown in Figure~\ref{fig:estimation_error_ou}. On average, the bounds from Proposition~\ref{p:prob_est} over-estimate the actual incurred error by about one order of magnitude.

\begin{figure}[htb]
    \centering
    \includegraphics[width=0.65\textwidth]{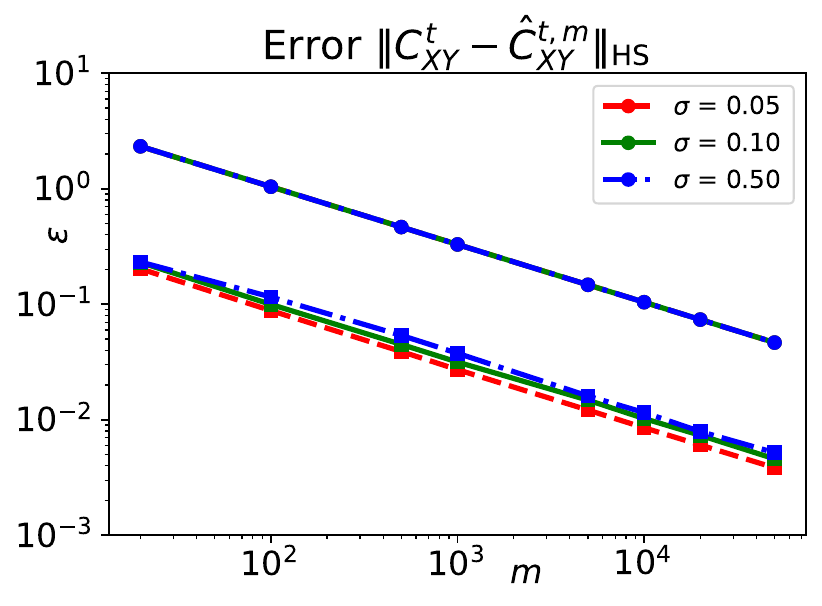}
    \caption{Ornstein-Uhlenbeck process: Error bound from Prop.~\ref{p:prob_est} (circles) versus $(1-\delta)$-percentile of the actual estimation error estimated from fifty independent trials (squares), for three different bandwidths $\sigma$ indicated by different colors and linestyles.}
    \label{fig:estimation_error_ou}
\end{figure}

\subsection{Duffing Oscillator}
To conclude this study, we illustrate that kernel EDMD, combined with state-of-the art model validation and low-rank approximation techniques, enables accurate prediction of a non-linear control system with very few necessary design choices on the modeler's end. We study the two-dimensional Duffing oscillator, governed by the ordinary differential equation
\begin{equation}
\label{eq:duffing}
    \frac{\mathrm{d}}{\mathrm{d} t}\begin{bmatrix}
        z_1(t)\\z_2(t)
    \end{bmatrix} = \begin{bmatrix}
        z_2 \\ -\alpha z_1 u - 2\beta z_1^3
    \end{bmatrix},
\end{equation}
which has been used as a model system in numerous previous studies. We set $\alpha = -1$, $\beta = 1$. By $z(t;z^0,u)$, we denote the state at time $t\geq 0$ emanating from the initial condition $z^0$ at time zero and the control $u:\R^{\geq 0}\to \R$. We generate $m = 10^4$ uniformly sampled initial conditions $\{x_k\}_{k=1}^m$ from the square $\calX = [-1.5, 1.5]^2$. Next, for fixed control inputs $\bar{u} \in \{0, 1\}$, we compute the successor state $y_k^{\bar{u}} = z(t;x_k,\bar{u}), k=1,\ldots,m$ for $t=0.025$ via numerical integration with time step $\Delta_t = 0.005$. The data pairs $\{(x_k, y_k^{\bar u})\}_{k=1}^{m}$ serve as training data for the kEDMD algorithm corresponding to the flow with constant control $\bar{u}\in \{0,1\}$. As shown in Figure~\ref{fig:duff_training}, the time-shifted points $y_k^{\bar{u}}$ can indeed exit the initial domain $\calX$, such that invariance of $\calX$ is not given. This in particular underlines the necessity of incorporating an additional set $\calY$ as done in the previous analysis.

\begin{figure}[htb]
    \centering
    \includegraphics[width=0.48\textwidth]{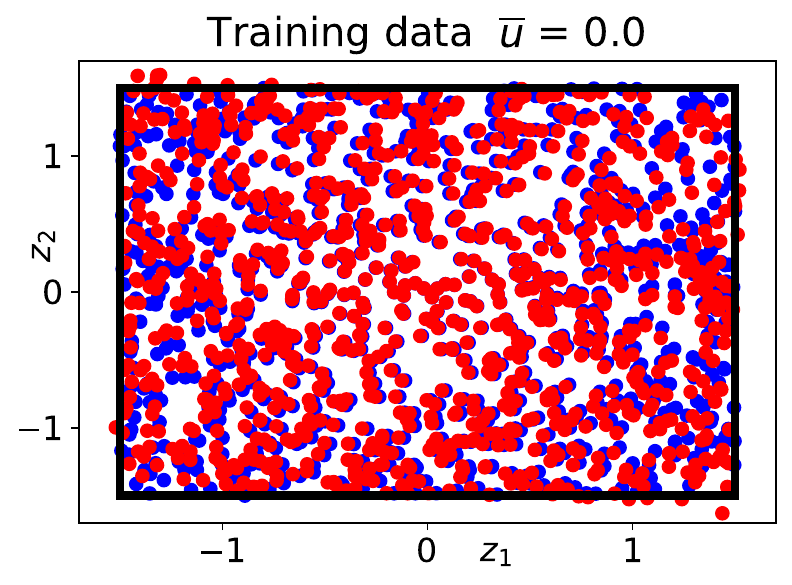}
    \includegraphics[width=0.48\textwidth]{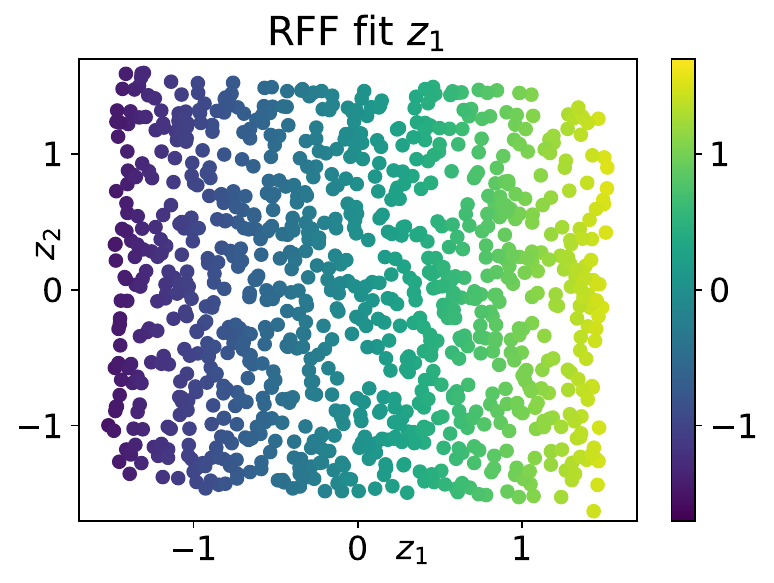}
    \caption{Left: initial training data $x_k$ (blue) and corresponding time shifted data $y_k$ (red) for fixed input $\bar u = 0$. The black square indicates the initial domain $\calX = [-1.5, 1.5]^2$. Right: Evaluation of the state variable $x_k$ on the data $y_k^{\bar u}$, approximated in terms of the RFF basis at $\sigma = 1.0$ and $p = 500$.}
    \label{fig:duff_training}
\end{figure}

 As reproducing kernel, we again select the Gaussian RBF kernel. To reduce the computational effort, we approximate the resulting feature maps using random Fourier features (RFFs)~\cite{rahimi2007}. It was shown in~\cite{nueske23_rff} that this is equivalent to applying plain EDMD with a randomly selected basis set of complex exponentials
\begin{equation*}
    \psi_i(x) = e^{i\omega_i^\top x},\quad \ 1\leq i \leq p\in \N,
\end{equation*}
where $\omega_i \in \R^d, \, 1\leq i \leq p$, are random frequencies drawn from the spectral measure associated with the kernel. In our case, the spectral measure is a normal distribution with standard deviation $\sigma^{-1}$, i.e. the inverse of the kernel bandwidth. Using the feature matrices {$\mathbf{M},\mathbf{M}_t^{\bar u}\in \C^{p \times m}$ defined by
\begin{align*}
    \mathbf{M}_{i,k} = e^{i\omega_i^\top x_k} ,\quad  (\mathbf{M}_t^{\bar u})_{i,k} = e^{i\omega_i^\top y_k^{\bar u}}, \quad 1\leq i\leq p,\ 1\leq k \leq m,
\end{align*}
the RFF Koopman matrix corresponding to the flow with constant control $\bar{u}\in \{0,1\}$ amounts to
\begin{equation*}
    \mathbf{K}_{\bar{u}}
    = \left(\mathbf{M}\mathbf{M}^{\rm H} + \gamma {\rm Id} \right)^{-1} \mathbf{M}(\mathbf{M}_t^{\bar u})^{\rm H},
\end{equation*}
where $\gamma > 0$ is a regularization parameter.

We begin by tuning the bandwidth $\sigma>0$, the feature size $p\in \N$, and the tolerance $\gamma > 0$. To this end, we draw a test ensemble of $100$ random initial conditions, from which we generate simulations of the ODE~\eqref{eq:duffing} until time $T_{\rm test} = 20t = 0.5$. We also use the learned Koopman models to predict the system state over the same time horizon, starting from the same initial conditions, and compute the relative mean-squared error between the true and predicted states over all time steps. Note that state prediction with the Koopman operator requires an approximation of the state variables $z_1$ and $z_2$ as linear combinations of the RFF basis functions $\psi_i$. These approximations are computed by a least-squares fit along with each Koopman model. An example is shown in Figure~\ref{fig:duff_training}.

The results of the model validation are shown for different inputs $u:[0,T_{\mathrm{test}}]\to \R$ and different hyper-parameters in Figure~\ref{fig:duffing_validation}. We conclude that the prediction error generally decreases with regularization if $\gamma \geq 10^{-5}$, but seems to destabilize if the regularization is decreased further. We therefore identify $\sigma =1.0$, $p = 500$ and $\gamma = 10^{-5}$ as suitable model parameters. 

\begin{figure}
    \centering
    \includegraphics[width=0.48\textwidth]{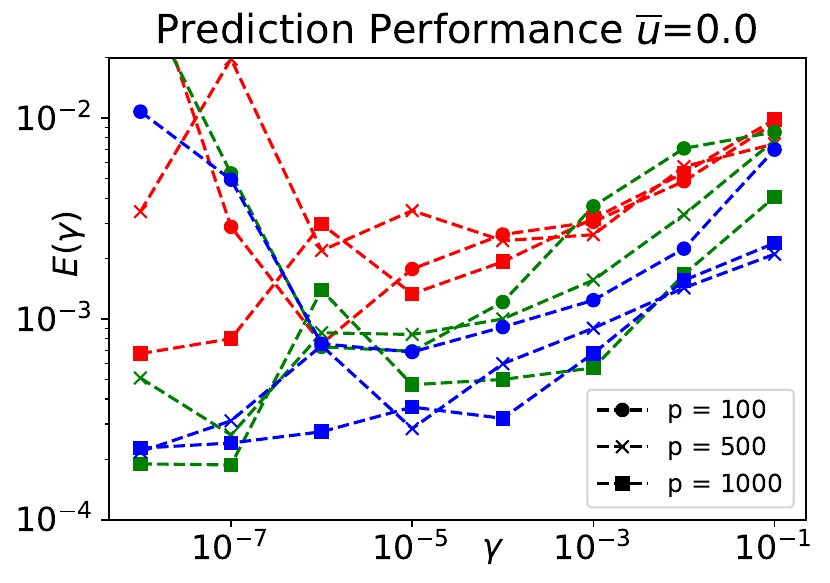}
    \includegraphics[width=0.48\textwidth]{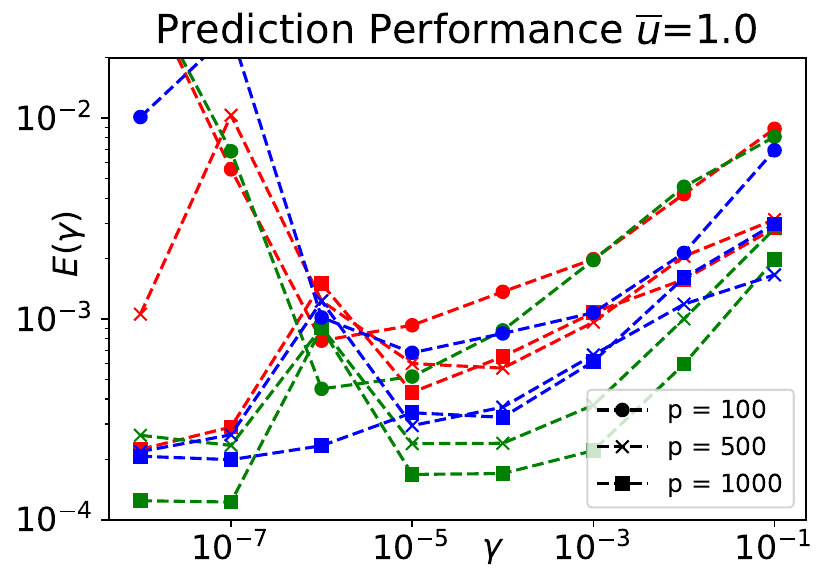}
    \caption{Duffing oscillator: Mean-squared prediction error of the trained Koopman model for $\bar{u} = 0.0$ (left) and $\bar{u} = 1.0$ (right), tested on $100$ new trajectories of length $T_{\rm test} = 20t = 0.5$. The error is shown as a function of the regularization strength $\gamma>0$ for different feature sizes $p\in \N$ (dots, x-es, squares) and bandwiths $\sigma \in \{0.5, 1.0, 1.5\}$ (blue, green, red).}
    \label{fig:duffing_validation}
\end{figure}

We then evaluate the long-time prediction performance of the bi-linear Koopman surrogate model
\begin{equation}
\label{eq:koopman_bilin}
    \mathbf{K}_{u(t)}\mathbf{w} = \left[\mathbf{K}_0 + (\mathbf{K}_1 - \mathbf{K}_0)u(t)\right]\mathbf{w}
\end{equation}
for the lifted state $\mathbf{w}\in \C^p$, generating $50$ trajectories for (a) constant control input $u \equiv 0$; (b) constant control input $u \equiv 1$; (c) time-dependent control input $u(t) = \cos(t)$. To obtain predictions corresponding to the real-valued system \eqref{eq:duffing}, we project the propagated complex-valued lifted state onto its real part. The time horizon covered by these trajectories is $T_{\rm long} = 500t = 12.5$, and we use the project-and-lift scheme~\cite{GoorMaho23} every 25 steps to improve performance. Figure~\ref{fig:duffing_long_time} shows the mean relative error over the real part of these trajectories as a function of simulation time. We also show exemplary trajectories for each of the three scenarios in the same figure, and observe excellent performance of the kernel-based bi-linear Koopman model.

\begin{figure}[htb]
    \centering
    \includegraphics[width=0.48\textwidth]{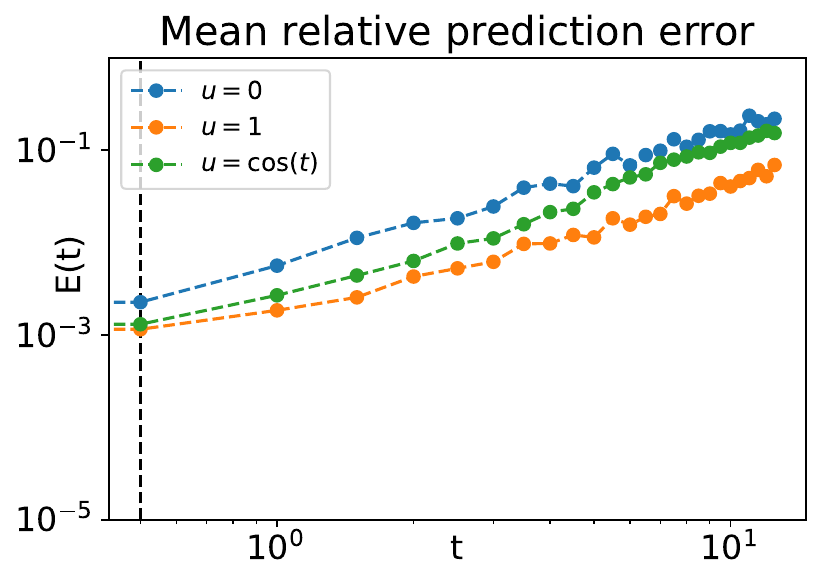}
    \includegraphics[width=0.48\textwidth]{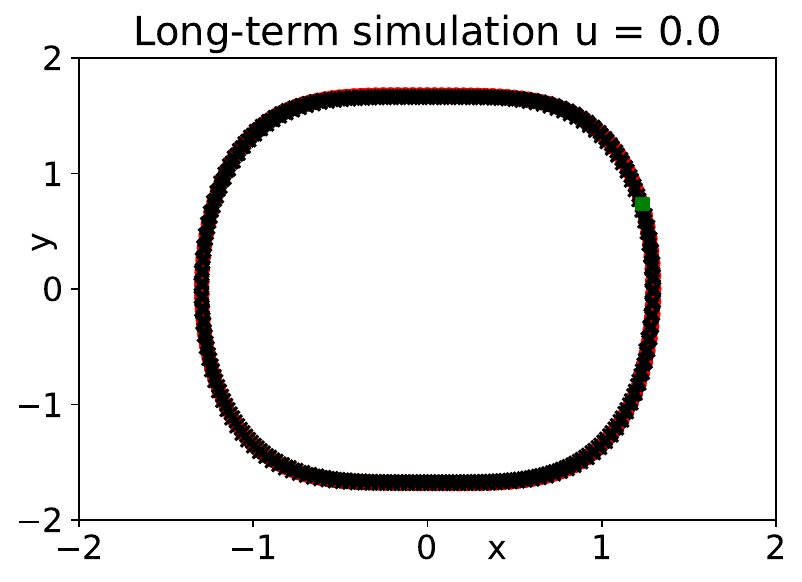}
    \includegraphics[width=0.48\textwidth]{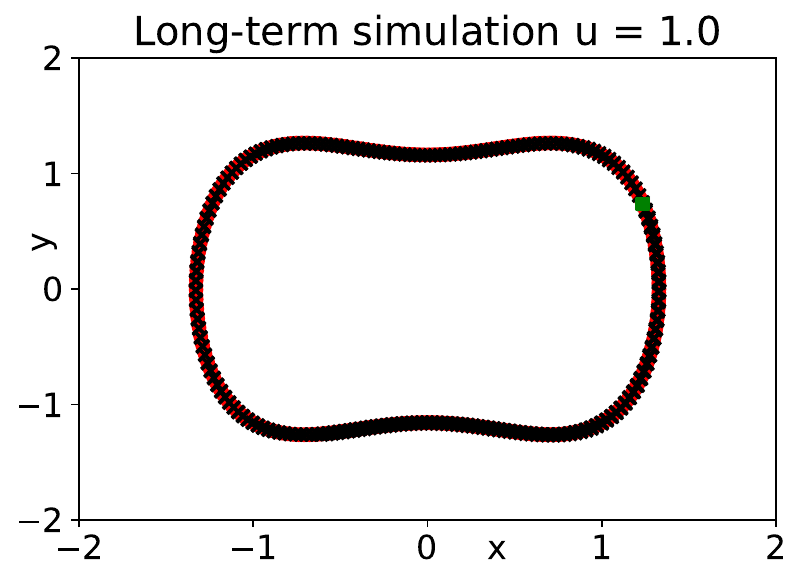}
    \includegraphics[width=0.48\textwidth]{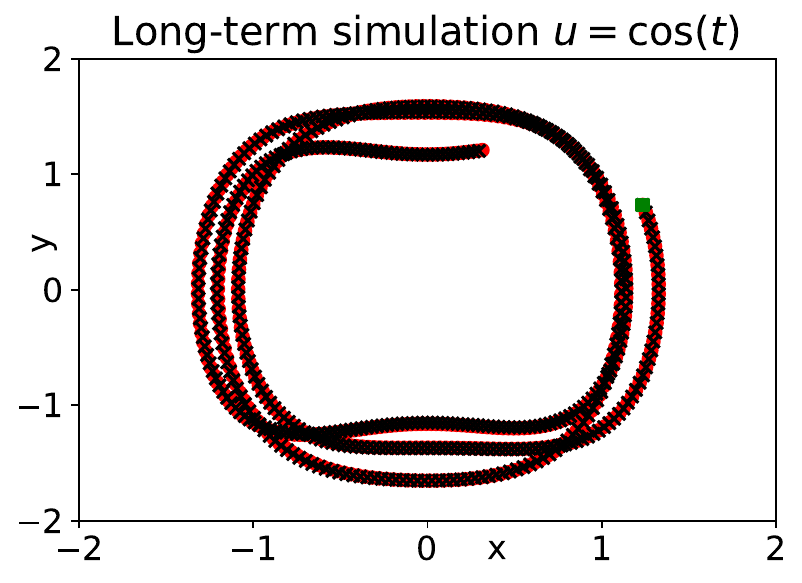}
    \caption{Upper left: Mean-squared prediction error for the Duffing oscillator predicted by the bi-linear Koopman model~\eqref{eq:koopman_bilin} for different control inputs, evaluated on $50$ new trajectories of length $T_{\rm long} = 500t = 12.5$. The black dashed line indicates the time horizon $T_{\rm test}$ used for model validation in Figure~\ref{fig:duffing_validation}. The remaining panels show exemplary trajectories of the same length $T_{\rm long}$ for different control inputs. Red dots mark the predicted solution, the true ODE solution is shown in black. The initial value is highlighted in green. }
    \label{fig:duffing_long_time}
\end{figure}

\section{Supplements}
\noindent{\bf Acknowledgments.} F. Philipp was funded by the Carl Zeiss Foundation within the project DeepTurb--Deep Learning in and from Turbulence. He was further supported by the free state of Thuringia and the German Federal Ministry of Education and Research (BMBF) within the project THInKI--Th\"uringer Hochschulinitiative für KI im Studium. 
K. Worthmann gratefully acknowledges funding by the German Research Foundation (DFG grant WO 2056/14-1, project no 507037103).

\medskip
\noindent{\bf Data Availability.} Codes and data required for reproducing the numerical results in this study are available at \url{https://doi.org/10.5281/zenodo.10378153}.

\bigskip
\appendix

\section{The condition \texorpdfstring{\eqref{e:LipMeas}}{x} for deterministic systems}\label{a:deterministic}
Let $\calX = \ol\Omega$, where $\Omega\subset\R^d$ is open and bounded. We consider the flow $F : [0,T]\times\calX\to\R^d$ of a given autonomous ODE $\dot x = b(x)$. Let $\calY\subset\R^d$ such that
\[
\calZ := F([0,T]\times\calX)
\]
is contained in $\calY$. In this section, we provide conditions on the flow and a finite measure $\mu$ on $\calY$ which guarantee that condition \eqref{e:LipMeas_det} holds, i.e.,
\[
\mu(F_t^{-1}(A))\,\le\,L\cdot\mu(A)\quad\forall A\in\calB(\calY),\,\forall t\in [0,T].
\]
First of all, we note that it is sufficient to prove the above for $\calY$ replaced by $\calZ$. Indeed, then for $A\in\calB(\calY)$ we have
\[
\mu(F_t^{-1}(A)) = \mu(F_t^{-1}(A\cap\calZ))\le L\cdot\mu(A\cap\calZ)\le L\cdot\mu(A).
\]

\begin{prop}\label{p:deterministic}
The condition \eqref{e:LipMeas_det} is satisfied whenever the following conditions are met:
\begin{itemize}
\item The flow $F : [0,\wt T]\times\calX\to\R^d$ of the ODE $\dot x = b(x)$ is defined for some $\wt T > T$ and is $C^1$.
\item $d\mu(x) = \vphi(x)\,dx$ with a positive density $\vphi\in L^\infty(\calX)$ satisfying $1/\vphi\in L^\infty(\calZ)$.
\end{itemize}
\end{prop}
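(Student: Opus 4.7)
\begin{proof}[Proof proposal for Proposition \ref{p:deterministic}]
The plan is a standard change-of-variables argument, exploiting compactness of $[0,T]\times\calX$ to obtain uniform bounds on the Jacobian determinant of the flow.

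\smallskip\noindent
\emph{Step 1 (reduction).} As already noted just before the proposition, it suffices to prove \eqref{e:LipMeas_det} for $A\in\calB(\calZ)$, since $F_t^{-1}(A) = F_t^{-1}(A\cap F_t(\calX)) = F_t^{-1}(A\cap\calZ\cap F_t(\calX))$ and $\mu(A\cap\calZ)\le\mu(A)$. So from now on, fix $t\in [0,T]$ and $A\in\calB(\calZ)$.

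\smallskip\noindent
\emph{Step 2 (flow is a $C^1$-diffeomorphism onto its image).} Since $F\in C^1([0,\wt T]\times\calX)$ and $\wt T > T$, we may extend $F$ to a slightly larger open set containing $[0,T]\times\calX$. For fixed $t\in[0,T]$, the map $F_t : \calX\to F_t(\calX)$ is injective (by uniqueness of ODE solutions it admits the $C^1$ inverse $x\mapsto F_{-t}(x)$, applied to $\calZ$) and hence a $C^1$-diffeomorphism onto $F_t(\calX)\subset\calZ$.

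\smallskip\noindent
\emph{Step 3 (uniform Jacobian bound).} By Liouville's formula,
\[
\det DF_t(x) = \exp\!\left(\int_0^t (\nabla\cdot b)(F_s(x))\,ds\right),\qquad (t,x)\in[0,T]\times\calX,
\]
so $\det DF_t(x)$ is continuous and strictly positive on the compact set $[0,T]\times\calX$. Consequently there exists a constant $c>0$ (depending only on $b$, $\calX$, and $T$) such that
\[
\det DF_t(x)\,\ge\,c\qquad\text{for all }(t,x)\in[0,T]\times\calX.
\]

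\smallskip\noindent
\emph{Step 4 (change of variables).} Writing $\chi_A$ for the indicator of $A$ and applying the change of variables $y = F_t(x)$, we get
\[
\mu(F_t^{-1}(A)) = \int_\calX\chi_A(F_t(x))\vphi(x)\,dx = \int_{F_t(\calX)\cap A}\vphi(F_t^{-1}(y))\,\frac{1}{\det DF_t(F_t^{-1}(y))}\,dy.
\]
Using Step 3 and $\vphi|_\calX\le\|\vphi\|_{L^\infty(\calX)}$, the right-hand side is bounded above by
\[
\frac{\|\vphi\|_{L^\infty(\calX)}}{c}\,\Leb(F_t(\calX)\cap A)\,\le\,\frac{\|\vphi\|_{L^\infty(\calX)}}{c}\,\Leb(A).
\]

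\smallskip\noindent
\emph{Step 5 (reverse bound of $\mu(A)$ by $\Leb(A)$).} Since $A\subset\calZ$ and $1/\vphi\in L^\infty(\calZ)$, we have $\vphi(y)\ge\|1/\vphi\|_{L^\infty(\calZ)}^{-1}$ for a.e.\ $y\in\calZ$, hence
\[
\mu(A) = \int_A\vphi(y)\,dy\,\ge\,\|1/\vphi\|_{L^\infty(\calZ)}^{-1}\Leb(A).
\]
Combining Steps 4 and 5 yields \eqref{e:LipMeas_det} with the explicit constant
\[
L = \frac{\|\vphi\|_{L^\infty(\calX)}\,\|1/\vphi\|_{L^\infty(\calZ)}}{c}.
\]
\end{proof}

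\noindent The proof is essentially routine; the only point requiring care is the uniform positive lower bound on $\det DF_t(x)$, which is handled cleanly by Liouville's formula combined with the compactness of $[0,T]\times\calX$. Note also that the assumptions on $\vphi$ are used in two distinct places (an upper bound on $\calX$ in the forward direction, a lower bound on $\calZ$ in the reverse direction), reflecting the asymmetry between the initial set $\calX$ and the image $\calZ$.
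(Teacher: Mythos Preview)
Your proof is correct and follows the same overall strategy as the paper: reduce to $A\subset\calZ$, establish a uniform lower bound on the Jacobian determinant of the flow, then change variables and use the $L^\infty$ bounds on $\vphi$ and $1/\vphi$. The paper also arrives at exactly the constant $L = \|\vphi\|_\infty\|1/\vphi\|_\infty/\delta$.

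The one substantive difference is in Step~3. You invoke Liouville's formula, which presupposes that $\nabla\!\cdot b$ is defined and continuous along trajectories, i.e., effectively that $b\in C^1$. The proposition, however, only assumes the \emph{flow} is $C^1$, which does not by itself give you the variational equation or Liouville's identity. The paper circumvents this by a purely elementary iteration: using $F_0' = I_d$ and joint continuity of $(t,z)\mapsto F_t'(z)$ on the compact set $[0,\wt T-T]\times\calZ$, one finds $t_0>0$ with $\det F_t'(z)\ge\tfrac12$ for all $t\in[0,t_0]$, $z\in\calZ$; then the semigroup property and chain rule give $\det F_{(k+1)t_0+t}'(x) = \det F_{t_0}'(F_{kt_0+t}(x))\cdot\det F_{kt_0+t}'(x)\ge 2^{-(k+2)}$ by induction. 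This works verbatim under the stated hypothesis. In practice the distinction is mild (the paper itself notes that $b\in C^1(\calY)$ is the standard way to verify the flow hypothesis), but if you want your argument to match the proposition as stated, you should either replace Liouville by this semigroup iteration or explicitly add $b\in C^1$ as an assumption. A secondary minor difference: you apply change of variables directly for Borel $A$, whereas the paper first treats open sets and then passes to general Borel sets via outer regularity; your version is fine since the change-of-variables formula holds for measurable sets under a $C^1$-diffeomorphism.
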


Note that the first assumption is satisfied if $b\in C^1(\calY)$, cf.\ \cite[Theorem 10.3]{amann}.

\begin{proof}[Proof of Proposition \ref{p:deterministic}]
First of all, we show that
\begin{align}\label{e:delta}
\delta := \inf\big\{\det F_t'(x) : x\in\calX,\,t\in [0,T]\big\} > 0.
\end{align}
Since $F_0(z) = z$ for all $z\in\calZ$, we have $F_0'(z) = I_d$ for all $z\in\calZ$. Let $t_1 := \wt T - T$. By assumption, the flow $F : [0,t_1]\times\calZ\to\R^d$ is defined on $\calZ$ and is $C^1$. Hence, there exists some $t_0\in (0,t_1)$ such that $\det F_t'(z)\ge\frac 12$ for all $t\in [0,t_0]$ and all $z\in\calZ$. Choose $t_0$ such that $T = nt_0$ for some $n\in\N$. We prove by induction that for all $k\in\{0,\ldots,n-1\}$ we have
\begin{align}\label{e:induction}
\forall\,t\in [0,t_0]\,\forall x\in\calX : \det F_{kt_0+t}'(x)\ge 2^{-k-1}.
\end{align}
Then \eqref{e:delta} holds with $\delta = 2^{-n}$. We have proved \eqref{e:induction} already for $k=0$. So, let \eqref{e:induction} hold for some $k < n-1$, and let $t\in [0,t_0]$ and $x\in\calX$. Then
\[
\det F_{(k+1)t_0+t}'(x) = \det D_x F_{t_0}(F_{kt_0+t}(x)) = \det F_{t_0}'(F_{kt_0+t}(x))\cdot\det F_{kt_0+t}'(x)\,\ge\,\tfrac 12\cdot\tfrac 1{2^{k+1}} = \tfrac 1{2^{k+2}},
\]
which concludes the proof of \eqref{e:delta}.

In order to prove \eqref{e:LipMeas_det}, we may restrict the flow to $F : [0,T]\times\Omega\to\R^d$ as $\mu(\partial\Omega)=0$. Let $t\in [0,T]$, set $\calZ_t := F_t(\Omega)$, and let $U\subset\R^d$ be open. Note that $\calZ_t$ is open due to the invariance of domain theorem\footnote{or simply the inverse function theorem thanks to \eqref{e:delta}}. Hence, also $U_t = U\cap\calZ_t$ is open. By the change of variables formula we have
\begin{align*}
\mu(F_t^{-1}(U))
&= \mu(F_t^{-1}(U_t)) = \int_{F_t^{-1}(U_t)}\vphi(x)\,dx = \int_{U_t}\vphi(F_t^{-1}(y))|\det DF_t^{-1}(y)|\,dy\\
&= \int_{U_t}\frac{\vphi(F_t^{-1}(y))}{|\det F_t'(F_t^{-1}(y))|\vphi(y)}\vphi(y)\,dy\,\le\,\frac{\|\vphi\|_\infty\|\frac 1{\vphi}\|_\infty}{\delta}\mu(U_t) \le \frac{\|\vphi\|_\infty\|\frac 1{\vphi}\|_\infty}{\delta}\mu(U).
\end{align*}
The claim now follows from the outer regularity of the measures $\mu$ and $\mu\circ F_t^{-1}$.
\end{proof}

\section{Proof of Proposition \ref{p:Kbounded}}\label{a:proof_Kt_bounded}
Let $\psi\in B(\calY)$. Then $|(K^t\psi)(x)|\le\int_\calY|\psi(y)|\,\rho_t(x,dy)\le\|\psi\|_\infty$, hence $K^t\psi\in B(\calX)$ with $\|K^t\psi\|_\infty\le\|\psi\|_\infty$ and
$\|K^t\psi\|_p\le\|\psi\|_\infty$. Let now $1\le p<\infty$. Then, by H\"older's inequality and \eqref{e:LipMeas_Func},
$$
\int_\calX|(K^t\psi)(x)|^p\,d\mu(x)\le\int_\calX\int_\calY|\psi(y)|^p\,\rho_t(x,dy)\,d\mu(x)\le L\int_\calY|\psi(y)|^p\,d\mu(y),
$$
and hence $\|K^t\psi\|_{p,\calX}\le L^{1/p}\|\psi\|_{p,\calY}$.

Let $\psi\in C_c(\calY)$ (the set of continuous functions on $\calY$ with compact support). 
For fixed $x\in \calX$, denote the stochastic solution process of the SDE \eqref{e:SDE0} with initial value $x$ by $X_t^x$. Since $X_t^x(\omega)$ is continuous in $t$ a.s.\ (see \cite[Theorem 5.2.1]{oe}) and $X_t^x\in\calY$ a.s., we have $\psi(X_t^x(\omega))\to\psi(X_0^x(\omega)) = \psi(x)$ as $t\to 0$ a.s.\ Hence, by dominated convergence,
$$
K^t\psi(x) = \bE[\psi(X_t^x)] = \int\psi(X_t^x(\omega))\,d\bP(\omega)\,\to\,\psi(x)
$$
as $t\to 0$. By the first part of this proof,
$$
|K^t\psi(x) - \psi(x)|\,\le\,|K^t\psi(x)| + |\psi(x)|\le 2\|\psi\|_\infty,\qquad x\in\calX,
$$
hence, dominated convergence implies that $\|K^t\psi - \psi|_\calX\|_{p,\calX}\to 0$ as $t\to 0$.

If $\psi\in L^p(\calY;\mu)$ and $\veps>0$, there exists $\eta\in C_c(\calY)$ such that $\|\psi-\eta\|_{p,\calY} < \frac \veps 2(1+L^{1/p})^{-1}$, see \cite[Theorem 3.14]{rrca}. Further, choose $\delta>0$ such that $\|K^t\eta-\eta|_\calX\|_{p,\calX} < \frac{\veps}2$ for $t < \delta$. Then
$$
\|K^t\psi - \psi|_\calX\|_{p,\calX}\le \|K^t(\psi - \eta)\|_{p,\calX} + \|K^t\eta-\eta|_\calX\|_{p,\calX} + \|\eta-\psi\|_{p,\calY} < \veps
$$
for $t < \delta$, which proves the claim.\qed

\section{An Example: Lack of contractivity of the Koopman semigroup}\label{s:unbounded}
\noindent Consider the deterministic case with the scalar ODE
\begin{align*}
    \dot{x} = x(2-x)\quad\text{on $\calX = [1,2]$}.
\end{align*}
The solution of this ODE is given by
\begin{align*}
x(t;x_0) = \frac{2e^{2t}x_0}{2-x_0+e^{2t}x_0},\quad x_0\in\calX,
\end{align*}
which is non-decreasing and approaches $2$ as $t\to\infty$ for any $x_0\in\calX$. In particular, $\calX$ is invariant under the flow of the ODE. Since $\calX$ is compact, the convergence towards the equilibrium $x=2$ is uniform in $x_0\in\calX$. We will use this fact to show that the Koopman semigroup is not a bounded semigroup on $L^2(\calX)$, i.e., it does not satisfy $\|K^t\|_{L(L^2(\calX))}\leq M$ for some $M\geq 0$ and all $t\ge 0$. To show this, it suffices to find $\psi\in L^2(1,2)$ such that for any $M\geq 0$, there is $t\geq 0$ such that $\|K^t\psi\|_{L^2(\calX)}\geq M\|\psi\|_{L^2(\calX)}$.

Let $\psi(x) := \frac{1}{(2-x)^{1/4}}$. Then $\psi\in L^2(\calX)$ as 
\begin{align*}
\int_1^2\psi^2(x)\,\mathrm{d}x = \int_1^2 \frac{1}{\sqrt{2-x}}\,\mathrm{d}x = \int_0^1 \frac{1}{\sqrt{x}}\,\mathrm{d}x = 2\sqrt x\Big|_0^1 = 2 < \infty.
\end{align*}
Further, we have that $\psi(x)\to\infty$ as $x\to 2$ and $\psi$ is non-decreasing such that 
\begin{align}\label{ctr:Meps}
\forall M\geq 0 \,\,\exists\,\veps>0:\,\psi(x) \geq M \qquad \forall x\in (2-\varepsilon,2).
\end{align}
Moreover, as the flow is non-decreasing towards $2$:
\begin{align}\label{ctr:epst}
\forall\veps\geq 0\,\,\exists\,t^*\geq 0\,\,\forall t \geq t^*\,\,\forall x_0\in\calX\backslash\{2\}:\, x(t;x_0)\in (2-\varepsilon,2).
\end{align}
Hence, for given $M$, choose $\varepsilon$ as in \eqref{ctr:Meps} and correspondingly $t^*$ as in \eqref{ctr:epst}. Then $x(t;x^0)\in (2-\varepsilon,2)$ for all $t\ge t^*$ and $x_0\in\calX\backslash\{2\}$, hence
\begin{align*}
\forall t\geq t^*:\quad  \|K^t\psi\|^2_{L^2(\calX)} = \int_1^2 |\psi(x(t;x_0))|^2\,\mathrm{d}x_0 \ge M^2 = \tfrac 12 M^2\|\psi\|_{L^2(\calX)}^2.
\end{align*}

\section{On the condition \texorpdfstring{$K^t\bH_\calY\subset\bH_\calX$}{x} in case of the Gaussian kernel and the Ornstein-Uhlenbeck process}\label{a:OU_invariance}
The Ornstein-Uhlenbeck (OU) process on $X = \R$ is the solution of the SDE $dX_t = -\alpha X_t\,dt + dW_t$, where $\alpha>0$. The invariant measure $\mu$ and the Markov transition kernel $\rho_t$, $t>0$, are known and given by
\begin{align*}
d\mu(x) = \sqrt{\frac{\alpha}{\pi}}\cdot e^{-\alpha x^2}\,dx
\qquad\text{and}\qquad
\rho_t(x,dy) = \sqrt{\frac{c_t}{\pi}}\cdot\exp\Big[-c_t(y - e^{-\alpha t}x)^2\Big]\,dy,
\end{align*}
where
$$
c_t = \frac{\alpha}{1 - e^{-2\alpha t}}.
$$
We consider the Gaussian radial basis function (RBF) kernels on $\R$ with bandwidth $\sigma>0$, i.e.,
\begin{equation*}
    k^\sigma(x, y) = \exp\left[-\frac{(x-y)^2}{\sigma^2}\right].
\end{equation*}
For $y\in\R$ and a kernel $k$ on $\R$ set $k_y(x) := k(x,y)$, $x\in\R$.

Let $X\subset\R$ be any set. By $\bH_{\sigma}(X)$ we denote the RKHS generated by the kernel $k^\sigma_X = k^\sigma|_{X\times X}$ on $X$. Hilbert space norm and scalar product on $\bH_{\sigma}(X)$ will be denoted by $\|\cdot\|_{\sigma,X}$ and $\<\cdot\,,\,\cdot\>_{\sigma,X}$, respectively. For two positive definite kernels on $X$ we write $k_1\preceq k_2$ if
$$
\sum_{i,j=1}^n\alpha_i\alpha_j k_1(x_i,x_j)\le \sum_{i,j=1}^n\alpha_i\alpha_j k_2(x_i,x_j)
$$
for any choice of $n\in\N$ and $\alpha_j\in\R$, $x_j\in X$, $j=1,\ldots,n$. We also write $V\lhook\joinrel\xrightarrow{\hspace{.1cm}c\hspace{.1cm}}W$ for two normed vector spaces $V$ and $W$ if $V\subset W$ is continuously embedded in $W$.

\begin{lem}\label{l:rbf}
Let $X\subset\R$ and $0 < \sigma_1 < \sigma_2$. Then we have  $\bH_{\sigma_2}(X)\!\lhook\joinrel\xrightarrow{\hspace{.1cm}c\hspace{.1cm}}\bH_{\sigma_1}(X)$ with
\[
\|\psi\|_{\sigma_1,X}\le\sqrt{\tfrac{\sigma_2}{\sigma_1}}\cdot\|\psi\|_{\sigma_2,X},\qquad\psi\in\bH_{\sigma_2}(X),
\]
and $k_X^{\sigma_2}\preceq\frac{\sigma_2}{\sigma_1}k_X^{\sigma_1}$.
\end{lem}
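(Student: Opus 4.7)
The proof rests on combining a Fourier/Bochner representation of the Gaussian kernel on $\R$ with Aronszajn's classical comparison theorem for RKHS.

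First, I would express the Gaussian RBF kernel on $\R$ through its inverse Fourier transform: a direct computation with the Gaussian integral yields
\[
k^{\sigma}(x,y) \;=\; \frac{\sigma}{2\sqrt{\pi}}\int_\R e^{i\omega(x-y)}\,e^{-\sigma^2\omega^2/4}\,d\omega,
\]
i.e.\ Bochner's representation with spectral density $\frac{\sigma}{2\sqrt{\pi}}\,e^{-\sigma^2\omega^2/4}$. This representation simultaneously exhibits positive definiteness and makes the bandwidth dependence transparent.

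Second, I would use this representation to prove the kernel ordering. For any finite collection $\{x_j\}_{j=1}^n\subset X$ and real coefficients $\alpha_j$,
\[
\sum_{i,j=1}^{n} \alpha_i\alpha_j\, k^{\sigma_2}(x_i,x_j) \;=\; \frac{\sigma_2}{2\sqrt{\pi}}\int_\R\Bigl|\sum_{j=1}^n \alpha_j e^{i\omega x_j}\Bigr|^2 e^{-\sigma_2^2\omega^2/4}\,d\omega.
\]
Because $\sigma_1<\sigma_2$ implies $e^{-\sigma_2^2\omega^2/4}\le e^{-\sigma_1^2\omega^2/4}$ pointwise in $\omega$, the integrand is dominated by the same expression with $\sigma_2$ replaced by $\sigma_1$ in the exponent, leading to
\[
\sum_{i,j} \alpha_i\alpha_j k^{\sigma_2}(x_i,x_j) \;\le\; \frac{\sigma_2}{\sigma_1}\sum_{i,j}\alpha_i\alpha_j k^{\sigma_1}(x_i,x_j).
\]
This establishes the claimed ordering $k_X^{\sigma_2}\preceq\frac{\sigma_2}{\sigma_1}k_X^{\sigma_1}$ for every $X\subset\R$.

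Third, I would invoke Aronszajn's classical inclusion theorem (cf.\ \cite{a,pr}): whenever two kernels $K_1,K_2$ on $X$ satisfy $K_2-K_1\succeq 0$, one has $\bH(K_1)\subseteq\bH(K_2)$ as vector spaces of functions, with the inclusion being contractive, i.e.\ $\|\psi\|_{\bH(K_2)}\le\|\psi\|_{\bH(K_1)}$ for every $\psi\in\bH(K_1)$. Applying this with $K_1:=k_X^{\sigma_2}$ and $K_2:=\frac{\sigma_2}{\sigma_1}k_X^{\sigma_1}$, and using that rescaling a reproducing kernel by a constant $c>0$ yields the same space of functions with norm scaled by $1/\sqrt{c}$, one obtains $\bH_{\sigma_2}(X)\subseteq\bH_{\sigma_1}(X)$ together with
\[
\sqrt{\tfrac{\sigma_1}{\sigma_2}}\,\|\psi\|_{\sigma_1,X}\;=\;\|\psi\|_{K_2}\;\le\;\|\psi\|_{\sigma_2,X}.
\]
Multiplying through by $\sqrt{\sigma_2/\sigma_1}$ delivers the stated estimate, and continuity of the embedding follows from it.

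The main obstacle is bookkeeping rather than substance: one must keep careful track of the counterintuitive fact that a \emph{larger} kernel corresponds to a \emph{larger} RKHS but a \emph{smaller} norm, and therefore apply the Aronszajn comparison to the rescaled kernel $\frac{\sigma_2}{\sigma_1}k_X^{\sigma_1}$ instead of $k_X^{\sigma_1}$ itself; the scaling factor $\sqrt{\sigma_2/\sigma_1}$ in the final inequality is a direct consequence of this rescaling.
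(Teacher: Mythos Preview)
Your argument is correct. You establish the kernel ordering $k_X^{\sigma_2}\preceq\frac{\sigma_2}{\sigma_1}k_X^{\sigma_1}$ directly from the Bochner representation and then invoke the forward direction of Aronszajn's inclusion theorem to obtain the embedding together with the norm bound. The paper proceeds in the opposite order: it cites \cite[Corollary~6]{shs} for the embedding $\bH_{\sigma_2}(X)\hookrightarrow\bH_{\sigma_1}(X)$ with constant $\sqrt{\sigma_2/\sigma_1}$, and then uses the \emph{converse} direction of Aronszajn's theorem \cite[Theorem~5.1]{pr} to deduce the kernel ordering from the already-established embedding. Your route is more self-contained---it actually exhibits why the factor $\sigma_2/\sigma_1$ appears, namely from the prefactor in the spectral density---whereas the paper's version is shorter but outsources the substance to \cite{shs} (which itself uses a Fourier description close to yours). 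Functionally, the two arguments are equivalent; they just traverse the Aronszajn biconditional in opposite directions.
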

\begin{proof}
The first claim is Corollary 6 in \cite{shs}. The second follows from Aronszajn's inclusion theorem \cite[Theorem 5.1]{pr}.
\end{proof}

\begin{thm}\label{t:OU_invariant}
Let $\calY = \R$ and let $\calX\subset\R$ be any compact set with non-empty interior. Then for each $t\ge 0$ and all $\alpha,\sigma > 0$ we have 
\[
K^t\bH_{\sigma}(\calY)\subset\bH_{\sigma}(\calX)
\qquad\text{with}\qquad
\big\|K^t\big\|_{\bH_{\sigma}(\calY)\to\bH_{\sigma}(\calX)}\le e^{\frac\alpha 2t}.
\]
\end{thm}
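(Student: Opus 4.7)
The strategy is a direct computation: evaluate $K^t$ on the reproducing kernel sections $k^\sigma_y(\cdot) = k^\sigma(\cdot,y)$, identify the image as a rescaled Gaussian kernel of a \emph{larger} bandwidth, and then use Lemma~\ref{l:rbf} twice -- once to compare inner products on the source side, once to embed the target RKHS into $\bH_\sigma$.

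\smallskip\noindent\textbf{Step 1: Explicit action on kernel sections.}
Plug the known transition density $\rho_t(x,\mathrm{d}z)=\sqrt{c_t/\pi}\,\exp[-c_t(z-e^{-\alpha t}x)^2]\,\mathrm{d}z$ into
$(K^t k^\sigma_y)(x)=\int_\R e^{-(y-z)^2/\sigma^2}\rho_t(x,\mathrm{d}z)$
and complete the square. A short Gaussian integral shows
\[
(K^t k_y^\sigma)(x) \;=\; \frac{\sigma}{\tilde\sigma}\,k^{\hat\sigma}\!\bigl(x,e^{\alpha t}y\bigr),
\qquad \tilde\sigma^2 := \sigma^2+\tfrac{1-e^{-2\alpha t}}{\alpha},\quad \hat\sigma:=e^{\alpha t}\tilde\sigma.
\]
In particular, viewed as a function of $x\in\R$, $K^t k_y^\sigma$ belongs to $\bH_{\hat\sigma}(\R)$.

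\smallskip\noindent\textbf{Step 2: Norm estimate on a dense subspace.}
The finite linear combinations $\psi=\sum_{i=1}^n\alpha_i k^\sigma_{y_i}$ are dense in $\bH_\sigma(\R)$. For such $\psi$, linearity and Step~1 yield $K^t\psi = \tfrac{\sigma}{\tilde\sigma}\sum_i\alpha_i k^{\hat\sigma}(\cdot,e^{\alpha t}y_i)\in \bH_{\hat\sigma}(\R)$, and, using $\hat\sigma^2=e^{2\alpha t}\tilde\sigma^2$ to simplify $k^{\hat\sigma}(e^{\alpha t}y_i,e^{\alpha t}y_j)=k^{\tilde\sigma}(y_i,y_j)$,
\[
\|K^t\psi\|_{\hat\sigma,\R}^2
\;=\;\frac{\sigma^2}{\tilde\sigma^2}\sum_{i,j}\alpha_i\alpha_j\,k^{\tilde\sigma}(y_i,y_j).
\]
Since $\tilde\sigma>\sigma$, the second claim of Lemma~\ref{l:rbf} gives $k^{\tilde\sigma}\preceq\tfrac{\tilde\sigma}{\sigma}k^\sigma$, hence
\[
\|K^t\psi\|_{\hat\sigma,\R}^2
\;\le\;\frac{\sigma^2}{\tilde\sigma^2}\cdot\frac{\tilde\sigma}{\sigma}\sum_{i,j}\alpha_i\alpha_j\,k^\sigma(y_i,y_j)
\;=\;\frac{\sigma}{\tilde\sigma}\|\psi\|_{\sigma,\R}^2.
\]

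\smallskip\noindent\textbf{Step 3: Restrict to $\calX$ and switch back to bandwidth $\sigma$.}
By the restriction property $\|f|_\calX\|_{\hat\sigma,\calX}\le\|f\|_{\hat\sigma,\R}$ for RKHS (Aronszajn/Paulsen--Raghupathi, already quoted in the paper) and the first claim of Lemma~\ref{l:rbf}, since $\hat\sigma>\sigma$,
\[
\|K^t\psi\|_{\sigma,\calX}
\;\le\;\sqrt{\tfrac{\hat\sigma}{\sigma}}\;\|K^t\psi\|_{\hat\sigma,\calX}
\;\le\;\sqrt{\tfrac{\hat\sigma}{\sigma}}\;\|K^t\psi\|_{\hat\sigma,\R}
\;\le\;\sqrt{\tfrac{\hat\sigma}{\sigma}}\sqrt{\tfrac{\sigma}{\tilde\sigma}}\;\|\psi\|_{\sigma,\R}
\;=\;\sqrt{\tfrac{\hat\sigma}{\tilde\sigma}}\;\|\psi\|_{\sigma,\R}
\;=\;e^{\alpha t/2}\|\psi\|_{\sigma,\R},
\]
where the final equality uses $\hat\sigma/\tilde\sigma=e^{\alpha t}$. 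A density argument extends the bound from linear combinations of kernel sections to all of $\bH_\sigma(\R)$; pointwise convergence in $\bH_\sigma(\R)$ implies pointwise convergence of $K^t\psi$ on $\calX$, so the limiting estimate is preserved.

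\smallskip\noindent\textbf{Main obstacle.}
The bookkeeping of the three bandwidths $\sigma<\tilde\sigma<\hat\sigma$ is the only nontrivial point: one must verify that the explicit algebra $\hat\sigma^2-\tilde\sigma^2 = (e^{2\alpha t}-1)\tilde\sigma^2$ precisely cancels the rescaling factor $e^{\alpha t}y$ appearing in the centers, so that the computation in Step~2 produces the kernel $k^{\tilde\sigma}$ rather than some uncontrolled object. Once this identity is in place, the two norm comparisons from Lemma~\ref{l:rbf} exactly telescope to $\sqrt{\hat\sigma/\tilde\sigma}=e^{\alpha t/2}$, with no slack to spare.
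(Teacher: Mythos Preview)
Your proof is correct and follows essentially the same approach as the paper: compute $K^t k^\sigma_y$ explicitly as a rescaled Gaussian of larger bandwidth, then apply Lemma~\ref{l:rbf} twice to telescope the constants to $e^{\alpha t/2}$. Your $\tilde\sigma,\hat\sigma$ correspond to the paper's $\sigma/\tau,\nu$, and the algebra is identical. One minor difference: the paper uses that restriction $\bH_\sigma(\R)\to\bH_\sigma(\calX)$ is an \emph{isometry} for Gaussian kernels (from \cite{shs}), whereas you only use that it is a contraction---either suffices here. Your density argument is a touch terse (you should make explicit that norm convergence in $\bH_\sigma(\R)$ implies uniform convergence, hence $K^t\psi_n\to K^t\psi$ pointwise by dominated convergence, so the bounded extension agrees with $K^t$), but the paper's own version is essentially the same idea routed through $L^2_\mu$.
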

\begin{proof}
First of all, it follows from \cite[Corollaries 4\&5]{shs} that to each $\psi\in\bH_\sigma(\calX)$ there exists a unique extension $\tilde\psi\in\bH_\sigma(\calY)$ such that $\<\tilde\psi,\tilde\phi\>_{\sigma,\calY} = \<\psi,\phi\>_{\sigma,\calX}$ for all $\psi,\phi\in\bH_\sigma(\calX)$. Conversely, we have
\begin{align}\label{e:sp_inclusion}
\bH_\sigma(\calX) = \{\psi|_\calX : \psi\in\bH_\sigma(\calY)\}
\qquad\text{and}\qquad
\<\psi,\phi\>_{\sigma,\calY} = \<\psi|_\calX,\phi|_\calX\>_{\sigma,\calX},\qquad \psi,\phi\in\bH_\sigma(\calY).
\end{align}
For $t=0$ the claim is obviously true. Hence, suppose that $t>0$. For $z\in\R$ let us compute $K^tk^\sigma_z$. For this, we define
$$
\tau = \sqrt{\frac{c_t\sigma^2}{1+c_t\sigma^2}},
\qquad\text{and}\qquad
\nu = \frac{e^{\alpha t}}{\tau}\sigma\,>\,\frac\sigma\tau > \sigma.
$$
Since for $\sigma_1,\sigma_2>0$ and $z,w\in\R$ we have
\[
\int_{-\infty}^\infty k_z^{\sigma_1}(x)\,k_w^{\sigma_2}(x)\,dx = \sqrt{\pi}\cdot\frac{\sigma_1\sigma_2}{\sqrt{\sigma_1^2 + \sigma_2^2}}\cdot\exp\left(-\frac{(z-w)^2}{\sigma_1^2+\sigma_2^2}\right),
\]
with $\sigma_t = 1/\sqrt{c_t}$ we obtain for $x\in\calX$
\begin{align*}
(K^tk_z^\sigma)(x)
&= \int_{-\infty}^\infty k_z^\sigma(y)\,\rho_t(x,dy) = \sqrt{\frac{c_t}{\pi}}\int_{-\infty}^\infty k_z^\sigma(y)\exp\Big[-c_t(y - e^{-\alpha t}x)^2\Big]\,dy\\
&= \sqrt{\frac{c_t}{\pi}}\int_{-\infty}^\infty k_z^\sigma(y)k_{e^{-\alpha t}x}^{\sigma_t}(y)\,dy = \sqrt{c_t}\cdot\frac{\sigma\sigma_t}{\sqrt{\sigma^2 + \sigma_t^2}}\cdot \exp\left(-\frac{(z-e^{-\alpha t}x)^2}{\sigma^2+\sigma_t^2}\right)\\
&= \frac{\sigma}{\sqrt{\sigma^2 + 1/c_t}}\cdot\exp\left(-\frac{(e^{\alpha t}z-x)^2}{e^{2\alpha t}(\sigma^2+1/c_t)}\right) = \tau\cdot k^\nu_{e^{\alpha t}z}(x).
\end{align*}
Since $\nu>\sigma$, it follows from \eqref{e:sp_inclusion} and Lemma \ref{l:rbf} that
$$
K^tk_z^\sigma = \tau\cdot k^\nu_{e^{\alpha t}z}|_\calX\in\bH_{\nu}(\calX)\subset\bH_\sigma(\calX).
$$
Note that
$$
k^\nu(e^{\alpha t}x,e^{\alpha t}y) = \exp\left(-\frac{e^{2\alpha t}(x-y)^2}{\nu^2}\right) = \exp\left(-\frac{\tau^2(x-y)^2}{\sigma^2}\right) = k^{\sigma/\tau}(x,y).
$$
Now, let $n\in\N$ and $\alpha_j,x_j\in\R$, $j=1,\ldots,n$, be arbitrary and let $\psi = \sum_{j=1}^n\alpha_jk^\sigma_{x_j}\in\bH_\sigma(\calY)$. Then, again by \eqref{e:sp_inclusion} and Lemma \ref{l:rbf},
\begin{align*}
\|K^t\psi\|_{\sigma,\calX}^2
&\le\frac{\nu}{\sigma}\|K^t\psi\|_{\nu,\calX}^2
= \frac{\nu}{\sigma}\Bigg\|\sum_{j=1}^n\alpha_jK^tk^\sigma_{x_j}\Bigg\|_{\nu,\calX}^2 = \frac{\nu\tau^2}{\sigma}\Bigg\|\sum_{j=1}^n\alpha_jk^\nu_{e^{\alpha t}x_j}\Bigg\|_{\nu,\calY}^2\\
&= \frac{\nu\tau^2}{\sigma}\sum_{i,j=1}^n\alpha_i\alpha_jk^\nu(e^{\alpha t}x_i,e^{\alpha t}x_j) = \frac{\nu\tau^2}{\sigma}\sum_{i,j=1}^n\alpha_i\alpha_jk^{\sigma/\tau}(x_i,x_j)\\
&\le \frac{\nu\tau^2}{\sigma}\cdot\frac 1\tau\sum_{i,j=1}^n\alpha_i\alpha_jk^{\sigma}(x_i,x_j) = \frac{\nu\tau}\sigma\|\psi\|_{\sigma,\calY}^2 = e^{\alpha t}\|\psi\|_{\sigma,\calY}^2.
\end{align*}
This shows that $K^t$ maps $\bH_{0,\sigma}(\calY) := \linspan\{k^\sigma_y : y\in\calY\}\subset\bH_\sigma(\calY)$ boundedly into $\bH_\sigma(\calX)$. Since $\bH_{0,\sigma}(\calY)$ is dense in $\bH_\sigma(\calY)$, it follows that $K^t|_{\bH_{0,\sigma}}$ extends to a bounded operator $T : \bH_\sigma(\calY)\to\bH_\sigma(\calX)$. In order to see that $T\psi = K^t\psi$ for $\psi\in\bH_\sigma(\calY)$, let $(\psi_n)\subset\bH_{0,\sigma}(\calY)$ such that $\psi_n\to\psi$ in $\bH_\sigma(\calY)$. Then $K^t\psi_n = T\psi_n\to T\psi$ in $\bH_\sigma(\calX)$. Since $\bH_\sigma(\calY)\lhook\joinrel\xrightarrow{\hspace{.1cm}c\hspace{.1cm}}L^2_\mu(\calY)$, we have $\psi_n\to\psi$ in $L^2_\mu(\calY)$ and thus $K^t\psi_n\to K^t\psi$ in $L^2_\mu(\calX)$. Also, $K^t\psi_n\to T\psi$ in $L^2_\mu(\calX)$. Hence, $K^t\psi=T\psi$ $\mu$-a.e.\ on $\R$. But as both $K^t\psi$ and $T\psi$ are continuous and $\mu$ is absolutely continuous w.r.t.\ Lebesgue measure with a positive density, we conclude that $K^t\psi = T\psi\in\bH_\sigma$.
\end{proof}

\bigskip
\section*{Author Affiliations}
\end{document}